\newtheorem{theorem}{Theorem}[section]
\newenvironment{introthm}[1]
  {\intro}
  {\endintro}
\newtheorem{prop}[theorem]{Proposition}
\newtheorem{cor}[theorem]{Corollary}
\newtheorem{lem}[theorem]{Lemma}
\theoremstyle{definition}
\newtheorem*{rem}{Remark}
\newtheorem*{notation}{Notation}
\numberwithin{equation}{section}
\newcommand{\thmref}[1]{Theorem~\ref{#1}}
\newcommand{\lemref}[1]{Lemma~\ref{#1}}
\newcommand{\propref}[1]{Proposition~\ref{#1}}
\newcommand{\bt}{\mathbf t}                   
\newcommand{\bu}{\mathbf u}                   
\newcommand{\dN}{\mathbb N}                   
\newcommand{\dZ}{\mathbb Z}                   
\newcommand{\dR}{\mathbb R}                   
\newcommand{\dC}{\mathbb C}                   
\newcommand{\ra}{\rightarrow}                 
\newcommand{\tO}{\mathtt 0}                   
\newcommand{\tL}{\mathtt 1}                   
\let\abs=\envert
\let\norm=\enVert
\newcommand{\ang}[1]{\left\langle#1\right\rangle}       
\newcommand{\floor}[1]{\left\lfloor#1\right\rfloor}     
\newcommand{\ceil}[1]{\left\lceil#1\right\rceil}        
\newcommand{\p}[1]{\left( #1 \right)}                   
\newcommand{\e}{\operatorname{e}}                       
\DeclareMathOperator{\logp}{\log^+\!}                   
\begin{document}
\title[Normality of the Thue--Morse sequence]%
{Normality of the Thue--Morse sequence along Piatetski-Shapiro sequences,~II}

\author[C. M\"ullner]{Clemens M\"ullner}
\thanks{
The first author was supported by the Austrian Science Fund (FWF), Project F5502-N26,
which is a part of the Special Research Program
``Quasi Monte Carlo Methods: Theory and Applications'',
and Project F5002-N15, which is a part of the Special Research Program
``Algorithmic and Enumerative Combinatorics''.}

\address{Institute of Discrete Mathematics and Geometry,\\
Vienna University of Technology,
Vienna, Austria}
\author[L. Spiegelhofer]{Lukas Spiegelhofer}
\thanks{
The second author was supported by the Austrian Science Fund (FWF), Project F5502-N26,
which is a part of the Special Research Program
``Quasi Monte Carlo Methods: Theory and Applications'', and Project I1751,
called MuDeRa (Multiplicativity, Determinism, and Randomness).}

\address{Institute of Discrete Mathematics and Geometry,\\
Vienna University of Technology,
Vienna, Austria}

\begin{abstract}
We prove that the Thue--Morse sequence $\mathbf t$ along subsequences indexed by
$\lfloor n^c\rfloor$ is normal, where $1<c<3/2$.
That is, for $c$ in this range and for each $\omega\in\{0,1\}^L$,
where $L\geq 1$,
the set of occurrences of $\omega$ as a factor (contiguous finite subsequence)
of the sequence $n\mapsto \mathbf t_{\lfloor n^c\rfloor}$
has asymptotic density $2^{-L}$.
This is an improvement over a recent result by the second author,
which handles the case $1<c<4/3$.

In particular, this result shows that for $1<c<3/2$ the sequence
$n\mapsto \mathbf t_{\lfloor n^c\rfloor}$
attains both of its values with asymptotic density $1/2$,
which improves on the bound $c<1.4$ obtained by Mauduit and Rivat
(who obtained this bound in the more general setting of
$q$-\emph{multiplicative functions}, however)
and on the bound $c\leq 1.42$ obtained by the second author.

In the course of proving the main theorem, we show that $2/3$
is an \emph{admissible level of distribution} for the Thue--Morse sequence,
that is, it satisfies a Bombieri--Vinogradov type theorem for each exponent
$\eta<2/3$.
This improves on a result by Fouvry and Mauduit,
who obtained the exponent $0.5924$.
Moreover, the underlying theorem implies that every finite word
$\omega\in\{0,1\}^L$ is contained as an arithmetic subsequence of $\mathbf t$.
\end{abstract}
\maketitle
\section{Introduction}
The Thue--Morse sequence $\bt$ is a well-known infinite sequence on the two symbols $\tO$ and $\tL$,
which can be defined as follows.
Starting with the $1$-element sequence $\bt^{(0)}=(\tO)$
and constructing $\bt^{(n+1)}$ by concatenating $\bt^{(n)}$
and its Boolean complement $\neg\bt^{(n)}$,
the infinite sequence
\[\bt=\tO\tL\tL\tO\tL\tO\tO\tL\tL\tO\tO\tL\tO\tL\tL\tO\ldots\]
is the pointwise limit of these finite sequences.
In other words, it is the fixed point, starting with $\tO$,
of the substitution $\tO\mapsto\tO\tL$,
$\tL\mapsto\tL\tO$.
The sequence $\bt$ can therefore be seen as a $2$-automatic sequence
(see the book~\cite{AS2003} by Allouche and Shallit),
indeed it is one of the simplest such sequences.
Another equivalent definition uses the \emph{binary sum-of-digits function} $s$,
which counts the number of $1$s in the binary expansion of a nonnegative integer:
we have $\bt_n=\tO$ if and only if $2\mid s(n)$.
Since $\bt$ is an automatic sequence,
its factor complexity is bounded above by a linear function,
that is, the number $P(k)$ of different factors of length $k$ of $\bt$
is bounded by $Ck$ for some constant $C$.
(For the Thue--Morse sequence, we have $\limsup P(k)/k=10/3$~\cite{B87}.)
More about the Thue--Morse sequence can be found in the article~\cite{AS99} by Allouche and Shallit,
which gathers occurrences of the Thue--Morse sequence in different fields of mathematics and offers a good bibliography,
and in the article~\cite{M2001} by Mauduit.

Although the sequence $\bt$ itself has low complexity,
the situation changes completely if we consider the subsequence indexed by the squares $0,1,4,9,16,\ldots$.
Moshe~\cite{M2007} proved that this subsequence has full factor complexity, that is,
every block $\{\varepsilon_0,\ldots,\varepsilon_{L-1}\}\in\{\tO,\tL\}^L$, for $L\geq 1$,
occurs as a factor of the sequence $n\mapsto \bt_{n^2}$.
Drmota, Mauduit and Rivat~\cite{DMR2015} proved the stronger statement that each of these blocks in fact occurs with density $2^{-L}$,
in other words, this subsequence is a \emph{normal sequence}.
This latter result was the motivation for our research.

The second author~\cite{S2015} recently proved
(using an estimate for discrete Fourier coefficients from~\cite{DMR2015})
an analogous result for subsequences indexed by so-called
\emph{Piatetski-Shapiro sequences},
which are sequences of the form $\floor{n^c}$.
\begin{introthm}{0}[Spiegelhofer 2015]\label{thm:S2015}
Assume that $1<c<4/3$. Then the sequence
$\left(\bt_{\floor{n^c}}\right)_{n\geq 0}$
is normal.
\end{introthm}
The study of Piatetski-Shapiro sequences, ``Polynomials of degree $c$'',
can be motivated by problems for polynomials of degree $2$.
For example, while it is unknown whether there are infinitely many primes of the form $n^2+1$,
the Piatetski-Shapiro Prime Number Theorem~\cite{P53} states that for $1<c<12/11$
the number of primes of the form $\floor{n^c}$
behaves asymptotically as one would expect by heuristic arguments.
The exponent $c$ has been improved several times,
the currently best known bound $c<2817/2426$ being due to Rivat and Sargos~\cite{RS2001}.

In a similar way the problem of studying the sum of digits of $\floor{n^c}$ can be motivated.
It was asked by Gelfond~\cite{G68} to investigate the distribution in residue classes of the sum of digits of values of polynomials $f$ such that $f(\dN)\subseteq \dN$.
This problem could not be solved even for polynomials of degree two,
and so Mauduit and Rivat~\cite{MR95,MR2005}
first proved a nontrivial result on sequences
$n\mapsto\varphi\bigl(\lfloor n^c\rfloor\bigr)$,
where $1\leq c<1.4$ and $\varphi$ is a $q$-\emph{multiplicative function} with values on the unit circle $S^1$.
We do not give a definition of this term, we only note that the Thue--Morse sequence in the form $n\mapsto (-1)^{s(n)}$ is a $2$-multiplicative function.
(The cited result~\cite{MR2005} was also transferred to automatic sequences by Deshouillers, Drmota and Morgenbesser~\cite{DDM2012}.
They also proved in that article that for $1<c<10/9$ blocks of length~$2$ in Piatetski-Shapiro subsequences of $\bt$ occur with frequency~$1/4$.)
Dartyge and Tenenbaum~\cite{DT2006} made some progress on the original question of Gelfond,
and finally Mauduit and Rivat~\cite{MR2009} could tackle the sum of digits of squares.
(This result was later generalized to compact groups by Drmota and Morgenbesser~\cite{DM2012}.)
However, there remained a gap to be closed---%
nothing was known on Piatetski-Shapiro subsequences of $q$-multiplicative functions for $c$ in the range 
$[1.4,2)$. 
In~2014, the second author~\cite{S2014} extended the bound for $c$ in the case of the Thue--Morse sequence,
obtaining the result that for $c\leq 1.42$ the sequence $n\mapsto\mathbf t_{\floor{n^c}}$ attains both of its values with asymptotic density $1/2$ (that is, this sequence is \emph{simply normal}).

In the present paper, we improve on the known results on normality and simple normality of Piatetski-Shapiro subsequences of $\bt$.
Our main theorem is the following.
\begin{introthm}{1}\label{thm:main}
Let $1<c<3/2$. Then the sequence $\bu=\bigl(\bt_{\floor{n^c}}\bigr)_{n\geq 0}$ is normal.
More precisely, for any $L\geq 1$ there exists an exponent $\eta>0$ and a constant $C$
such that
\[
\Bigl\lvert\abs{\bigl\{n<N:\bigl(\bu(n),\bu(n+1),\ldots,\bu(n+L-1)\bigr)=\omega\bigr\}}-N/2^L\Bigr\rvert
\leq CN^{1-\eta}
\]
for all $\omega=\bigl(\omega_0,\ldots,\omega_{L-1}\bigr)\in\{\tO,\tL\}^L$.
\end{introthm}
The essential innovation provided by this theorem lies in the new bound $3/2=1.5$, which replaces the bound $4/3=1.\dot{3}$ for (proper) normality as in \thmref{thm:S2015}.
For comparison, we note that $4/3$ is the bound that Mauduit and Rivat obtained in the first paper~\cite{MR95} on Piatetski-Shapiro subsequences of $q$-multiplicative functions, while $1.42$~\cite{S2014} is the most recent improvement on the exponent $c$, concerning simple normality of Piatetski-Shapiro subsequences of $\bt$.
The new bound $3/2$ established in our theorem therefore is a significant improvement---%
not only does it surpass the bound $1.42$ for simple normality (which is the case that $L=1$),
it also pushes the bound for proper normality beyond this value.
Another improvement on \thmref{thm:S2015} is the error term $CN^{1-\eta}$,
where both the exponent $\eta$ and the constant can be made completely explicit.

In the proof of \thmref{thm:main}, we reduce the problem of handling Piatetski-Shapiro sequences to the study of \emph{Beatty sequences} $\floor{n\alpha+\beta}$,
where $\alpha,\beta\in\dR$ and $n$ is contained in a small interval in $\dN$ of length $N$, a process that is basically Taylor approximation of degree~$1$~\cite{S2014}.
This yields sums of the form $\sum_{n\in I}\bt(\floor{n\alpha+\beta})$.
In order to deal with these sums,
we use methods by Mauduit and Rivat~\cite{MR2009,MR2010},
introducing the two-fold restricted sum-of-digits function.
Afterwards, we eliminate the Beatty sequences $\floor{n\alpha+\beta}$ from our expressions by exploiting the usually very small discrepancy (modulo~$1$) of $n\alpha$-sequences.
The resulting expression can be estimated nontrivially with the help of a new estimate (similar to~\cite[Proposition~1]{DMR2015}) for discrete Fourier coefficients related to the sum-of-digits function, which finishes the proof of \thmref{thm:main}.

Let us give a few more details on the methods used in the proof.
By Taylor approximation, we may approximate $\floor{n^c}$ on short intervals $I$ by Beatty sequences in such a way that $\floor{n^c}=\floor{n\alpha+\beta}$ for most $n\in I$.
This method is summarized in \propref{prp:PS_via_beatty_2} and leads to a statement reminiscent of the Bombieri--Vinogradov theorem in prime number theory:
in order to prove our theorem, we have to study the distribution of the Thue--Morse sequence on Beatty sequences $\floor{n\alpha+\beta}$,
where we take an average in $\alpha$ over dyadic intervals $[D,2D]$ (see \thmref{thm:2}).
Of course, greater values of $D$ correspond to greater exponents $c$ in the original problem.
Therefore we want to obtain a nontrivial distribution result for given length $N$ of the Beatty sequence, and $D$ as large as possible.
This Beatty sequence approach has been followed by the second author~\cite{S2014}.
In that article, trigonometric approximation of indicator functions was used in order to dispose of the Beatty sequences,
which led to the integral~\eqref{eqn:FM} in Section~\ref{sec:main}.
An estimate for this integral, taken from~\cite{FM96}, yielded the new bound $1.42$,
which surprisingly beat the formerly best bound $1.4$~\cite{MR2005} (concerning simple normality of subsequences of $\bt$).
However, we also have a lower bound on this integral, which sets a limit for this method.
In particular, $3/2$ can not be reached in this way.

In the second part of the proof of \thmref{thm:main} we therefore use a method different from trigonometric approximation in order to handle Beatty subsequences of $\bt$.
This method is based on the fact that Beatty sequences $\floor{n\alpha+\beta}$, for most $\alpha$,
are uniformly distributed in residue classes, their discrepancy being very small.
To put it simply, we will have to deal with sums
\begin{align}\label{eqn:beatty_sum}
\sum_{n\in I}f(\floor{n\alpha+\beta}),
\end{align}
where $f$ is $2^\gamma$-periodic and $I\subseteq\dZ$ is an interval slightly longer than $2^\gamma$, for instance $\abs{I}=2^{\gamma(1+\varepsilon)}$.
By a result on the mean discrepancy of $n\alpha$-sequences (\lemref{lem:mean_discrepancy}),
we may replace this sum, on average, by
\[    \frac{\abs{I}}{2^\gamma}\sum_{n<2^\gamma}f(n)+O\bigl(2^\gamma(\log \lvert I\rvert)^2\bigr).    \]
In order to apply this argument, we have to obtain sums of the form~\eqref{eqn:beatty_sum}.
To this end, we adapt methods by Mauduit and Rivat~\cite{MR2009,MR2010}, thereby
introducing the two-fold restricted sum-of-digits function $s_{\mu,\lambda}$ (which we define below).
The transition from the sum-of-digits function $s$ to the truncated version $s_\lambda$ is straightforward and can also be carried out for $\floor{n^c}$ (see~\cite{S2015}).
It is not so clear, however, how to get rid of the lowest $\mu$ digits of $\floor{n^c}$, that is, how to proceed from $s_\lambda(\floor{n^c})$ to $s_{\mu,\lambda}(\floor{n^c})$.
At this point, Beatty sequences $\floor{n\alpha+\beta}$ come into play:
using rational approximations to $\alpha$ and a generalization of van der Corput's inequality (\lemref{lem:vdc}),
it is possible to eliminate the lowest $\mu$ digits of $\floor{n\alpha+\beta}$,
so that we are left with only $\gamma:=\lambda-\mu$ binary digits of $\floor{n\alpha+\beta}$.
This further reduction of the number of digits to be taken into account ultimately allows us to achieve the improvement $3/2$ over the bound $4/3$ obtained in~\cite{S2015}.
As the last step of the proof, we use \propref{prp:uniform_fourier} from below.
This proposition is a new estimate for discrete Fourier coefficients, related to a result by Drmota, Mauduit and Rivat~\cite[Proposition~1]{DMR2015}, and allows us not only to deal with general block lengths $L\geq 1$, but also to derive an explicit error term of the form stated in \thmref{thm:main}.

The classical Bombieri--Vinogradov Theorem is a statement on the average distribution of prime numbers in arithmetic progressions $nd+j$, where the average is taken in the modulus $d$.
Let
\[  \psi(x;d,j) = \sum_{\substack{1\leq n\leq x\\n\equiv j\bmod d}}\Lambda(n),  \]
where $\Lambda$ is the von~Mangoldt function defined by $\Lambda(n)=\log p$ if $n=p^k$ for some prime $p$ and some $k\geq 1$ and $\Lambda(n)=0$ otherwise.
Then the Bombieri--Vinogradov Theorem~\cite[Theorem~4]{B65} states that for all positive real numbers $A>0$ there exist $B>0$ and a constant $C$ such that
\[
\sum_{1\leq d\leq D}
\max_{1\leq y\leq x}
\max_{\substack{j\in\dZ\\(j,d)=1}}
\abs{\psi(y;d,j)-\frac y{\phi(d)}}
\leq Cx(\log x)^{-A},
\]
where $D=x^{1/2}(\log x)^{-B}$.
While no improvement on the exponent $1/2$ is known, the Elliott--Halberstam conjecture~\cite{EH70} asks whether we can choose $D=x^{1-\varepsilon}$ for any $\varepsilon>0$.
In other words, it is conjectured that~$1$ is an \emph{admissible level of distribution for the primes}, whereas the largest known admissible level (as of~2015) equals $1/2$.
In the article~\cite{FM96}, Fouvry and Mauduit prove a Bombieri--Vinogradov type theorem for the sum-of-digits function $s$ in base $2$.
In particular, for the case of the Thue--Morse sequence they set
\[
A^{\pm}(x;d,j)=\Bigl\lvert\bigl\{n<x:(-1)^{s(n)}=\pm 1, n\equiv j\bmod d\bigr\}\Bigr\rvert
\]
and obtain
\begin{equation}\label{eqn:FM96}
\sum_{1\leq d\leq D}
\max_{1\leq y\leq x}
\max_{j\in\dZ}
\Bigl\lvert  A^{\pm}(y;d,j) - \frac y{2d}  \Bigr\rvert    \leq    Cx(\log 2x)^{-A}
\end{equation}
for all real $A$ and $D=x^{0.5924}$.
The exponent $0.5924$ can therefore be called \emph{admissible level of distribution for the Thue--Morse sequence}.
(We note that Fouvry and Mauduit obtain in fact an error term $x^{1-\eta}$ for some $\eta>0$, which follows from~\cite[Th\'eor\`eme~2]{FM96}.
We will use this improved estimate in the proof of Corollary~\ref{cor:1}.)
Using sieve theory, they apply this result to the study of the sum of digits of almost prime numbers, that is, integers that are the product of at most two prime factors.
Later and by different means, Mauduit and Rivat~\cite{MR2010} studied the sum of digits of prime numbers, which was not accessible by the Fouvry--Mauduit method.

As we indicated earlier, the backbone of our main result is a Bombieri--Vinogradov type theorem for $\bt$.
We establish $2/3$ as an admissible level of distribution for the Thue--Morse sequence, improving on the bound established by Fouvry and Mauduit.
A Beatty sequence version of this result, combined with linear approximation of $\floor{n^c}$, allows us to obtain the improvement $1.5$ over the bound $1.42$.
\begin{notation}
We use the common abbreviations $\e(x)=\exp(2\pi i x)$, $\{x\}=x-\floor{x}$, and $\norm{x}=\min_{n\in\dZ}\abs{x-n}$, where $x$ is a real number.
For a prime number $p$ let $\nu_p(n)$ be the exponent of $p$ in the prime factorization of $n$.
We define the \emph{truncated binary sum-of-digits function} \[    s_\lambda(n)=s(\tilde n),    \]
where $0\leq \tilde n<2^\lambda$ and $\tilde n\equiv n\bmod 2^\lambda$,
which only takes into account the digits of $n$ at positions smaller than $\lambda$,
and for $\mu\leq \lambda$ the \emph{two-fold restricted binary sum-of-digits function}
\[    s_{\mu,\lambda}(n)=s_\lambda(n)-s_\mu(n),    \]
which only depends on the digits at the positions $\mu,\ldots,\lambda-1$.
The functions $s_\lambda$ and $s_{\mu,\lambda}$ are periodic with period $2^\lambda$.
In estimates we use the convenient abbreviation
\[    \logp x=\max\left\{1,\log x\right\}.    \]
Summation variables are always assumed to be nonnegative.
In particular, we often omit conditions such as $0\leq n$ under summation signs.
In this article, the symbol $\dN$ denotes the set of nonnegative integers.
Moreover, constants implied by the symbols $\ll$ and $O$ depend at most on $L$,
that is, on the length of a factor $\omega=(\omega_0,\ldots,\omega_{L-1})$ of the sequences considered.
\end{notation}
\section{Results and overall structure of the proofs}\label{sec:main}
In the introduction we have already stated our main theorem (\thmref{thm:main}), concerning the normality of Piatetski-Shapiro subsequences of $\bt$ for exponents $c<3/2$.
In the current section we state auxiliary results used for proving this theorem:
approximation of $\floor{n^c}$ by Beatty sequences (\propref{prp:PS_via_beatty_2}),
a Beatty--Bombieri--Vinogradov theorem for $\bt$ (\thmref{thm:2} and its precursor, \propref{prp:2}),
and an estimate for discrete Fourier coefficients (\propref{prp:uniform_fourier}).
Moreover, we state results analogous to \thmref{thm:2} and \propref{prp:2},
concerning arithmetic progressions (\thmref{thm:1} and \propref{prp:1}), which follow from the same method of proof.

Let $\alpha,\beta,y$ and $z$ be nonnegative real numbers such that $\alpha\geq 1$, and
$\omega=(\omega_0,\ldots,\omega_{L-1})\in\{0,1\}^L$, where $L\geq 1$ is an integer.
We define
\begin{multline*}
A_\omega(y,z;\alpha,\beta)=\bigl\lvert\bigl\{y\leq m<z:\exists n\in\dZ\text{ such that }m=\floor{n\alpha+\beta}\text{ and}
\\
s(\floor{(n+\ell)\alpha+\beta})\equiv \omega_\ell\bmod 2\text{ for }0\leq \ell<L\bigr\}\bigr\rvert
.
\end{multline*}
Note that for $L=1$, $\alpha,\beta\in\dZ$ and $y=0$ this yields the sets
$A^\pm(x;d,j)$ that occur in~\cite{FM96}
(in that article, however, general moduli $q\geq 2$ are handled.
In the present article, we are only concerned with the case $q=2$, that is, the Thue--Morse sequence.)
The first auxiliary result, a Bombieri--Vinogradov type theorem, is an average result on the sets $A_\omega(y,z;d,j)$ and might also be of independent interest.

\begin{theorem}\label{thm:1}
Let $L\geq 1$ be an integer and $\omega=(\omega_0,\ldots,\omega_{L-1})\in\{0,1\}^L$.
Assume that
\[0<\delta_1\leq\delta_2<2/3.\]
There exist $\eta>0$ and a constant $C$ such that
\[
\sum_{D<d\leq 2D}\max_{\substack{y,z\\0\leq y\leq z\\z-y\leq x}}\max_{j\in\dZ}
\abs{
A_\omega(y,z;d,j)
-
\frac{z-y}{2^Ld}
}
\leq
C
x^{1-\eta}
\]
for all $x$ and $D$ such that $x\geq 1$ and $x^{\delta_1}\leq D\leq x^{\delta_2}$.
\end{theorem}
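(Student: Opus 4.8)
The plan is to run the Mauduit--Rivat circle method in the Fouvry--Mauduit setting, with the averaging over the modulus $d$ playing the role it does in the Bombieri--Vinogradov theorem.

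\emph{Linearising the digit constraints.} For an integer $k$ and $\epsilon\in\{0,1\}$ one has $\tfrac12\bigl(1+(-1)^\epsilon(-1)^{s(k)}\bigr)=1$ if $\bt_k=\epsilon$ and $0$ otherwise; since $\floor{nd+j}=nd+j$ for $d,j\in\dZ$, expanding the product over $0\le\ell<L$ gives
\[
A_\omega(y,z;d,j)=\frac1{2^L}\sum_{I\subseteq\{0,\dots,L-1\}}(-1)^{\sum_{\ell\in I}\omega_\ell}\sum_{\substack{y\le m<z\\ m\equiv j\bmod d}}(-1)^{\sum_{\ell\in I}s(m+\ell d)}.
\]
The subset $I=\emptyset$ contributes $2^{-L}\bigl((z-y)/d+O(1)\bigr)$, and the error $O(1)$, summed over $D<d\le2D$, costs only $O(D)=O(x^{\delta_2})$, which is absorbed into $x^{1-\eta}$ because $\delta_2<1$. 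Hence it suffices to prove, for each fixed non-empty $I\subseteq\{0,\dots,L-1\}$, that
\[
\sum_{D<d\le 2D}\ \max_{\substack{0\le y\le z\\ z-y\le x}}\ \max_{j\in\dZ}\ \Bigl|\sum_{\substack{y\le m<z\\ m\equiv j\bmod d}}(-1)^{\sum_{\ell\in I}s(m+\ell d)}\Bigr|\ \ll\ x^{1-\eta}.
\]
Writing $m=nd+j$ with $0\le j<d$ turns the inner sum into $\sum_{n\in J}\prod_{\ell\in I}(-1)^{s((n+\ell)d+j)}$, where $J\subseteq\dN$ is an interval with $\abs J\le x/D+1$; after completing the innermost sums over periods and using partial summation, the two maxima cost only logarithmic factors, so I may treat a single such sum, uniformly in $j$ and in the endpoints of $J$.

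\emph{Digit reductions.} On $\sum_{n\in J}\prod_{\ell\in I}(-1)^{s((n+\ell)d+j)}$ I would first replace each $s$ by its truncation $s_\lambda$ at a level $\lambda$ to be chosen, the discarded top part being constant on subintervals of $J$ so that the estimate is applied blockwise without loss. Then I apply the generalised van der Corput inequality (Lemma~\ref{lem:vdc}) with a shift length $R$, bounding the square of the sum by $(\abs J/R)$ times a sum over shifts $|r|<R$ of correlations $\sum_{n}\prod_{\ell\in I}(-1)^{s_\lambda((n+\ell)d+j)-s_\lambda((n+r+\ell)d+j)}$, plus a diagonal term $\ll\abs J^2/R$. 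Adding $rd$ leaves untouched the digits below position $\nu_2(rd)$, so a carry-propagation analysis replaces $s_\lambda$ by the two-fold restricted function $s_{\mu,\lambda}$ for a suitable $\mu$, at the cost of an error counting the $n$ at which carries cross the boundaries $\mu$ or $\lambda$ (of size roughly $\abs J\,2^{-\mu}$ per shift); this is the elimination of the lowest $\mu$ digits, leaving $\gamma:=\lambda-\mu$ relevant digits. Each resulting correlation is then, up to these errors, a sum over $n\in J$ of a short product of shifts of $(-1)^{s_{\mu,\lambda}}$.

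\emph{The $d$-average and optimisation.} Since $s_{\mu,\lambda}$ depends only on the digits at positions $\mu,\dots,\lambda-1$, its discrete Fourier transform is concentrated on multiples of $2^\mu$, and the correlations above may be expanded through the discrete Fourier coefficients of the $\gamma$-digit functions that occur; Proposition~\ref{prp:uniform_fourier}, which incorporates the block $\omega$ (equivalently, the subset $I$), controls these coefficients and supplies power-saving cancellation in $\gamma$ together with an explicit error term. When the interval is long relative to the relevant period, i.e.\ $\abs J\gg2^\gamma$, the residual main term is a complete sum of $(-1)^{s_{\mu,\lambda}}$-values that is disposed of by equidistribution of $nd+j$ modulo a power of $2$ --- the arithmetic-progression specialisation of the mean-discrepancy bound Lemma~\ref{lem:mean_discrepancy}, which is exactly what handles the fractional parts in the Beatty version --- while the complementary geometric sums contribute a Fourier error whose average over $D<d\le2D$ is small because for most $d$ the multiples $td/2^\gamma$ are well distributed modulo $1$. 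Summing the per-$d$ bounds over $D<d\le2D$ and optimising $\lambda$, $\mu$ (hence $\gamma$) and $R$ against $\abs J\asymp x/D$ and the $\asymp D$ moduli then yields $\ll x^{1-\eta}$ with explicit $\eta>0$ and $C$. The lower bound $\delta_1>0$ only serves to make $x/D$ a fixed positive power of $x$; the upper bound $\delta_2<2/3$ is precisely the threshold below which the van der Corput diagonal $\abs J^2/R$, the carry error $\abs J^2\mu\,2^{-\mu}$, the truncation/shift constraint $Rd\lesssim2^\lambda$, and the averaged Fourier error can all be made simultaneously $\ll x^{1-\eta}$.

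\emph{Main obstacle.} The crux is exactly this joint optimisation: stripping the lowest $\mu$ digits forces $\mu$ to be a genuine power of $x$ (otherwise the carry error dominates), which in turn forces $\lambda$, hence the admissible shift range $\asymp2^\lambda/d$, to be large, while the gain from Proposition~\ref{prp:uniform_fourier} is only of the shape $2^{-c\gamma}$ with $c<1$ and the averaged Fourier error grows with $2^\gamma$. Reaching the \emph{endpoint} $2/3$ --- an admissible level of distribution for every $\eta<2/3$, with a power-saving error term --- rather than something strictly smaller is what makes the explicit, block-length-uniform estimate of Proposition~\ref{prp:uniform_fourier} (a strengthening of \cite[Proposition~1]{DMR2015}) the decisive new input, just as the earlier Fourier estimate of \cite{DMR2015} underlies the exponent $4/3$ of Theorem~\ref{thm:S2015}.
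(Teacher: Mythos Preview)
Your overall route matches the paper's---linearise, reduce to the exponential sum of Proposition~\ref{prp:1}, truncate digits, use van der Corput, and finish with the Fourier estimate Proposition~\ref{prp:uniform_fourier}---but there is a genuine gap in the digit-reduction step. You apply Lemma~\ref{lem:vdc} once (implicitly with $K=1$), obtaining shifts $r$ with $\lvert r\rvert<R$, and then assert that ``adding $rd$ leaves untouched the digits below position $\nu_2(rd)$, so a carry-propagation analysis replaces $s_\lambda$ by $s_{\mu,\lambda}$''. But $\nu_2(rd)=\nu_2(r)+\nu_2(d)$ is $O(1)$ for generic $r$ and $d$; there is no carry argument that strips a \emph{large} block of bottom digits from $s_\lambda(n+rd)-s_\lambda(n)$ unless $2^\mu\mid rd$. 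The paper applies van der Corput \emph{twice}, and the two applications play different roles. The first uses $K=2^\tau$ with $\tau=2m$ (depending only on $L$): this produces the shifted difference structure $b^r_\ell=a_{\ell-r2^\tau}-a_\ell$ with $\nu_2(r2^\tau)\ge 2m$, which is precisely the hypothesis Proposition~\ref{prp:uniform_fourier} needs, and the carry lemma then replaces $s$ by $s_\lambda$. The second uses $K=2^\mu$: now every shift $m2^\mu d$ is divisible by $2^\mu$, so the bottom $\mu$ digits cancel \emph{exactly} and one obtains $s_{\mu,\lambda}$ with no error; moreover this second differencing supplies the correlation variable $m$ that, after Fourier expansion, produces the factor $\e(hmd/2^{\lambda-\mu})$ and hence the geometric sum over $d$ that the large sieve controls. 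With a single van der Corput you have neither a legitimate $\mu$ of size a power of $x$ nor the extra averaging in $m$ (the term $1/M$ in~\eqref{eqn:collected_errors} would be $1$), and the optimisation cannot reach $\delta_2<2/3$.

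A smaller point: performing the top truncation $s\to s_\lambda$ \emph{before} van der Corput, on the grounds that the discarded part ``is constant on subintervals of $J$'', would force $2^\lambda\gtrsim x$, since the arguments $(n+\ell)d+j$ have size $\asymp x$. The paper truncates \emph{after} the first van der Corput via Lemma~\ref{lem:carry}, at cost only $O(N^2R2^\tau d/2^\lambda)$, which permits the much smaller choice $\lambda=2\mu$, $2^\mu\approx D^{1/\theta}$, used at the end of Section~\ref{sec:proof_prp1}.
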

(Note that the maximum is well-defined by a finiteness argument. The same holds true for the subsequent results.)
This theorem differs in several aspects from~\cite[Corollary~3]{FM96}.
The most important novelty is the exponent $2/3$, which improves on the exponent $0.5924$.
Moreover, the left endpoint of the interval 
$[y,z)$ 
may be an arbitrary nonnegative real number (which works well in the sum-of-digits setting, but fails for prime numbers).
Finally, this theorem handles consecutive elements of arithmetic subsequences of the Thue--Morse sequence $\mathbf t$.
This latter feature necessitates a nontrivial lower bound for $D$,
since factors of length $2$ of $\mathbf t$ do not appear with frequency $1/4$,
therefore the contribution of $d=1$ would already be too large.

Setting $L=1$ and using the above-cited result (with the improved error term $x^{1-\eta}$) in order to handle small step lengths $d$, we obtain the following corollary.
\begin{cor}\label{cor:1}
For real $y\geq 0$ and integers $d\geq 1$ and $j\geq 0$ set
\[
A(y;d,j)=\abs{\{m<y:s(m)\equiv 0\bmod 2, m\equiv j\bmod d\}}.
\]
For all $\delta\in(0,2/3)$ there exist $\eta>0$ and $C$ such that
\[
\sum_{1\leq d\leq D}\max_{y\leq x}\max_{j\in\dZ}
\abs{  A(y;d,j) - \frac{y}{2d}  }    \leq Cx^{1-\eta}
\]
for $x\geq 1$ and $D=x^\delta$.
\end{cor}
A simple but interesting consequence of Theorem~\ref{thm:1} is the following result.
\begin{cor}
Every finite sequence on the symbols $\tO$ and $\tL$ appears as an arithmetic subsequence of the Thue--Morse sequence.
\end{cor}

An adaptation of the proof of \thmref{thm:1} yields the following Beatty sequence analogue.
\begin{theorem}\label{thm:2}
Let $L\geq 1$ be an integer, $\omega=(\omega_0,\ldots,\omega_{L-1})\in\{0,1\}^L$
and $0<\delta_1\leq\delta_2<2/3$.
There exist $\eta>0$ and $C$ such that
\[
\int_{D}^{2D}\max_{\substack{y,z\\0\leq y\leq z\\z-y\leq x}}\max_{\beta\geq 0}
\abs{  A_\omega(y,z;\alpha,\beta) - \frac{z-y}{2^L\alpha}  }
\,\mathrm d\alpha
\leq Cx^{1-\eta}
\]
for all $x$ and $D$ such that $x\geq 1$ and $x^{\delta_1}\leq D\leq x^{\delta_2}$.
\end{theorem}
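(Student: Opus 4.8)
The plan is to deduce Theorem~\ref{thm:2} from Theorem~\ref{thm:1} by a discretization argument in the variable $\alpha$, exploiting the fact that the quantity $A_\omega(y,z;\alpha,\beta)$ depends on $\alpha$ only through the values $\floor{n\alpha+\beta}$ for $n$ in a bounded range, and that these values are locally constant in $\alpha$. First I would record the elementary observation that if $m=\floor{n\alpha+\beta}$ for some $m$ in $[y,z)$ with $z-y\le x$ and $\alpha\in[D,2D]$, then $n$ is constrained to an interval of length $O(x/D+1)=O(x)$ (using $D\ge x^{\delta_1}\ge 1$, so $x/D\le x$). Hence, for fixed $\beta$, as $\alpha$ ranges over $[D,2D]$ the tuple of relevant floor values $\bigl(\floor{(n+\ell)\alpha+\beta}\bigr)$ takes only finitely many distinct configurations, and the set of $\alpha$ realizing a given configuration is a union of intervals; the breakpoints occur where some $n\alpha+\beta$ is an integer, i.e. at $\alpha=(k-\beta)/n$. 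The number of such breakpoints in $[D,2D]$ with $n\ll x$ is $O(xD)$ after also ranging over the $O(xD)$ possible integer values $k$ — but I only need the cruder count that for each fixed $n\ll x$ there are $O(D)$ breakpoints, so $O(xD)$ in total. This already suggests a direct integration is too lossy, so the real input must be Theorem~\ref{thm:1}.

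The key step is therefore to relate the integral over $\alpha$ to a sum over a fine arithmetic progression of sampled values of $\alpha$, and then to invoke Theorem~\ref{thm:1}. Concretely, I would partition $[D,2D]$ into $M$ subintervals of equal length $D/M$, with $M$ a power of something like $x^{\kappa}$ to be optimized, and on each subinterval $[D+iD/M,D+(i+1)D/M)$ compare $A_\omega(y,z;\alpha,\beta)$ with $A_\omega(y,z;\alpha_i,\beta)$ where $\alpha_i$ is a rational approximation, say $\alpha_i=p_i/q$ with common denominator $q\asymp M/D$ or similar, chosen so that the $\alpha_i$ run through an arithmetic progression of step size $1/q$. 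The quantity $\bigl\lvert A_\omega(y,z;\alpha,\beta)-A_\omega(y,z;\alpha_i,\beta)\bigr\rvert$ is bounded by the number of $n\ll x$ for which the floor configuration at $\alpha$ differs from that at $\alpha_i$; since $\abs{\alpha-\alpha_i}\le D/M$, this happens only when $n\alpha_i+\beta$ lies within $O(nD/M)=O(xD/M)$ of an integer, and summing the expected count $\sum_{n\ll x}(nD/M+1)\ll x^2D/M+x$ gives the per-interval error. Multiplying by $M$ intervals... no — rather, the total contribution of these "bad" $n$ to the integral is $\int_D^{2D}(\#\text{bad }n)\,d\alpha$, which I would bound by interchanging sum and integral: for fixed $n$, $\int_D^{2D}\mathbf 1[\Abs{n\alpha+\beta}\le xD/M\cdot\text{something}]\,d\alpha\ll xD/M\cdot$ (measure per period) $\cdot$ (number of periods) which stays under control. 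The main obstacle, and where I expect to spend the most care, is this error analysis: making sure the loss from replacing $\alpha$ by the sampled $\alpha_i$ — integrated in $\alpha$ and maximized over $y,z,\beta$ — remains of size $x^{1-\eta}$ for a suitable $\eta>0$, while simultaneously keeping the step size $1/q$ of the arithmetic progression of $\alpha_i$-values compatible with the hypothesis $x^{\delta_1}\le D\le x^{\delta_2}$ of Theorem~\ref{thm:1} (the sampled denominators $q$ play the role of the modulus $d$, and the $\delta_1,\delta_2$ constraints must be inherited, possibly after shrinking the admissible interval slightly — hence the freedom $0<\delta_1\le\delta_2<2/3$ rather than a single $\delta$).

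Having reduced to the sampled values, the integral $\int_D^{2D}\max_{y,z,\beta}\abs{A_\omega(y,z;\alpha,\beta)-(z-y)/(2^L\alpha)}\,d\alpha$ becomes, up to the errors above, $(D/M)\sum_{i}\max_{y,z,\beta}\abs{A_\omega(y,z;\alpha_i,\beta)-(z-y)/(2^L\alpha_i)}$; writing $\alpha_i=p_i/q$ and using the scaling identity $\floor{n p_i/q+\beta}=\floor{(np_i+\lfloor q\beta\rfloor+\{q\beta\})/q}$ to convert the Beatty condition with slope $p_i/q$ into an arithmetic-progression condition modulo $q$ (the integer $m$ is hit by $np_i/q+\beta$ for some $n$ iff $qm-\lfloor q\beta\rfloor$ lies in a prescribed residue class mod $p_i$, roughly speaking, after also absorbing $\{q\beta\}$ into the left endpoint $y$ — this is exactly the place where "the left endpoint may be an arbitrary nonnegative real" in Theorem~\ref{thm:1} is used). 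I would then group the $\alpha_i$ by the size of $p_i$ into dyadic ranges $p_i\in(D',2D']$ and apply Theorem~\ref{thm:1} to each dyadic block, summing the resulting bounds $C x^{1-\eta}$ over the $O(\log x)$ dyadic scales and absorbing the logarithm into a slightly smaller $\eta$. Finally, replacing $1/\alpha_i$ by $1/\alpha$ inside the main term costs only $O(x/D^2\cdot D)=O(x/D)\le x^{1-\delta_1}$ when integrated, which is again of the desired form. Assembling the pieces, choosing $M$ (equivalently $q$) as a small power of $x$ to balance the sampling error against the Theorem~\ref{thm:1} contribution, yields the claimed bound $Cx^{1-\eta}$ with a possibly diminished but still positive $\eta$.
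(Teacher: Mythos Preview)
Your discretization strategy has a genuine quantitative obstruction that cannot be fixed by optimizing parameters. Write $N=x/D$ for the number of relevant indices $n$. If you sample $[D,2D]$ with spacing $D/M$ (equivalently, common denominator $q=M/D$), then for a fixed $n\le N$ the set of $\alpha\in[D,2D]$ where $\floor{n\alpha+\beta}\neq\floor{n\alpha_i(\alpha)+\beta}$ is contained in $\{\alpha:\Abs{n\alpha+\beta}\le nD/M\}$, which has measure $\asymp nD\cdot(2D/M)/n\cdot n = 2nD^2/M$ --- more simply, it has measure $O(nD^2/M)$. Summing over $n\le N$ gives a total sampling error of order $N^2D^2/M=x^2/M$ in the integral. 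To make this $\le x^{1-\eta}$ you need $M\ge x^{1+\eta}$, hence the numerators $p_i\sim q D=M\ge x^{1+\eta}$. But whatever integer modulus you extract from $\alpha_i=p_i/q$ in order to invoke Theorem~\ref{thm:1}, it is either $p_i$ (the period of the hit-set of the Beatty sequence) or $q$; the former exceeds $x$, so the arithmetic progression in $[y,z)$ has at most one term and Theorem~\ref{thm:1} gives nothing, while the latter is a \emph{single} value, not a dyadic range, and is $\ge x^{1+\eta}/D\ge x^{1+\eta-\delta_2}>x^{1/3}$ but also $\ge x^{1+\eta-\delta_1}$, which for small $\delta_1$ exceeds $x^{2/3}$. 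Either way the hypothesis $d\le x^{\delta_2'}$ with $\delta_2'<2/3$ fails. The tension you flagged between ``fine enough sampling'' and ``modulus in range'' is not merely delicate; it is an outright contradiction.

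There is a second, structural problem for $L\ge 2$. Even at a rational sample $\alpha_i=p/q$, the block $\bigl(\floor{(n+\ell)\alpha_i+\beta}\bigr)_{\ell<L}$ is \emph{not} of the form $\bigl((n'+\ell)d+j\bigr)_{\ell<L}$ for any integers $d,j$: the successive gaps $\floor{(n+\ell+1)\alpha_i+\beta}-\floor{(n+\ell)\alpha_i+\beta}$ take two values $\floor{\alpha_i}$ and $\floor{\alpha_i}+1$, so the Thue--Morse block condition does not reduce to the one appearing in $A_\omega(y,z;d,j)$. Your sketch of the conversion (``$m$ is hit iff $qm-\floor{q\beta}$ lies in a residue class mod $p_i$'') treats only the $L=1$ hit-set, not the $L$-tuple.

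The paper does \emph{not} deduce Theorem~\ref{thm:2} from Theorem~\ref{thm:1}. Instead it runs the same reduction (orthogonality in $\{0,1\}^L$ to pass to exponential sums, then Lemma~\ref{lem:vinogradov} to trade $\max_{y,z}$ for a factor $\e(n\xi)$ and a harmless $\log$-integral) directly on the Beatty quantity, landing on Proposition~\ref{prp:2}, the exact analogue of Proposition~\ref{prp:1} with $\sum_{D\le d<2D}$ replaced by $\int_D^{2D}\,\mathrm d\alpha$ and the shift $j$ by $\beta$. The real work --- handling real $\alpha$ --- happens inside the proof of Proposition~\ref{prp:2}: one approximates $\alpha/2^\mu$ by a Farey fraction so that van~der~Corput with step $q(\alpha)$ pushes $mq(\alpha)\alpha$ close to a multiple of $2^\mu$, after which the argument parallels the integer case. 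In short, the ``discretization'' is performed not on $\alpha$ globally, but on $\alpha/2^\mu$ via the Farey dissection, and only after van~der~Corput has shortened the effective range of $n$; this is what avoids the $x^2/M$ loss that kills your approach.
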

As we announced in the introduction,
this theorem can be used to obtain \thmref{thm:main}.

The proofs of Theorems~\ref{thm:1} and~\ref{thm:2} are based on exponential sum estimates.
In order to establish \thmref{thm:1}, it is sufficient to prove the following proposition.
\begin{prop}\label{prp:1}
Let $L\geq 1$ be an integer, $a=(a_0,\ldots,a_{L-1})\in\{0,1\}^L$ and $a\neq (0,\ldots,0)$.
For real numbers $N,D\geq 1$ and $\xi$ set
\[
S_1 = S_1(N,D,\xi)
=
\sum_{D\leq d<2D}
\max_{j\geq 0}
\Biggl\lvert
  \sum_{n<N}
  \e\Biggl(\frac 12
    \sum_{\ell<L}
    a_\ell
    s\bigl((n+\ell)d+j\bigr)
  \Biggr)
  \e(n\xi)
\Biggr\rvert
.
\]
Let $0<\rho_1\leq\rho_2<2$.
There exists an $\eta>0$ and a constant $C$ such that
\begin{equation}\label{eqn:prp1_estimate}
\frac{S_1}{ND}
\leq
CN^{-\eta}
\end{equation}
holds for all $\xi\in\dR$ and all real numbers $N,D\geq 1$ satisfying
$N^{\rho_1}\leq D \leq N^{\rho_2}$.
\end{prop}
For $L=1$ this result intuitively states that for most step lengths $d$ slightly smaller than the square of the length of the sum,
we have a nontrivial estimate for sums over the Thue--Morse sequence on arithmetic progressions.

Analogously, \thmref{thm:2} is based on the following result.
\begin{prop}\label{prp:2}
Let $L\geq 1$ be an integer, $a=(a_0,\ldots,a_{L-1})\in\{0,1\}^L$ and $a\neq (0,\ldots,0)$.
For real numbers $D,N\geq 1$ and $\xi$ set
\[
\tilde S_1 = \tilde S_1(N,D,\xi)
=
\int_{D}^{2D}
\max_{\beta\geq 0}
\Biggl\lvert
  \sum_{n<N}
  \e\Biggl(
    \frac 12
    \sum_{\ell<L}
    a_\ell
    s\bigl(\lfloor(n+\ell)\alpha+\beta\rfloor\bigr)
  \Biggr)
  \e(n\xi)
\Biggr\rvert
\,\mathrm d \alpha
.
\]
Let $0<\rho_1\leq\rho_2<2$.
There exist $\eta>0$ and a constant $C$ such that
\begin{equation}\label{eqn:prp2_estimate}
\frac{\tilde S_1}{ND}
\leq
CN^{-\eta}
\end{equation}
holds for all real numbers $D,N\geq 1$ satisfying
$N^{\rho_1}\leq D \leq N^{\rho_2}$ and for all $\xi\in\dR$.
\end{prop}
The proofs of Propositions~\ref{prp:1} and~\ref{prp:2} in turn rely on an estimate for discrete Fourier coefficients related to the sum-of-digits function.
These Fourier coefficients have been used as an essential tool in the article~\cite{DMR2015} on the normality of the Thue--Morse sequence along the sequence of squares.
For nonnegative integers $d$ and $\lambda$, for sequences
$i:\dN\ra\dN$ and $a:\dN\ra\dZ$, where $a$ has finite support,
and for $h\in\dZ$ we define
\begin{equation}\label{eqn:G_definition}
G_\lambda^{i,a}(h,d)
=
\frac 1{2^\lambda}
\sum_{u<2^\lambda}
\e\Biggl(\frac 12
\sum_{\ell\in\dN}
a_\ell s_\lambda\bigl(u+\ell d+i_\ell\bigr)-\frac{hu}{2^\lambda}
\Biggr).
\end{equation}
The function $h\mapsto G_\lambda^{i,a}(h,d)$ is the discrete Fourier transform of the $2^\lambda$-periodic sequence
\[
  n \mapsto \e\Biggl(\frac 12\sum_{\ell\geq 0} a_\ell s_\lambda\bigl(n + \ell d + i_\ell\bigr)\Biggr).
\]
We have the following important technical estimate for these Fourier terms.
\begin{prop}\label{prp:uniform_fourier}
Let $L\geq 1$ be an integer and choose $m\geq 5$ such that $2^{m-5}\leq L<2^{m-4}$.
Assume that
$(a_\ell)_{\ell\in\dZ}$ is a sequence such that
$a_0=1$, $a_1,\ldots,a_{L-1}\in\{0,1\}$
and $a_\ell=0$ for 
$\ell\not\in[0,L)$. 
For $r\geq 1$ and $\ell\geq 0$ let $b^r_\ell=a_{\ell-r}-a_\ell$.
There exist $\eta>0$ and $C$
such that for all $\lambda\geq 0$ and $r\geq 1$ satisfying
$2m\leq\nu_2(r)\leq \lambda/4$,
and for all
sequences $(i_\ell)_{\ell\in\dZ}$ satisfying
$i_0=0$ and $0\leq i_{\ell+1}-i_\ell\leq 1$ for $0\leq \ell<L+r-1$ we have
\[
\frac 1{2^\lambda}\sum_{d<2^\lambda}\max_{h<2^\lambda}
\abs{  G_\lambda^{i,b^r}(h,d)  }^2    \leq    C2^{-\eta\lambda}.
\]
\end{prop}
This result differs from \cite[Proposition~1]{DMR2015} in two aspects.
First, the maximum over~$h$ is \emph{inside} the sum over~$d$;
second, the sequence $b^r$ consists of two identical blocks (modulo $2$) spaced by $r$.
The constant as well as the exponent do not depend on the shift $r$,
which will allow us to prove the quantitative normality result as stated in \thmref{thm:main}.
\begin{rem}
Combining \propref{prp:uniform_fourier} with the method of proof from~\cite{S2015}, we could obtain a quantitative version of \thmref{thm:S2015} that differs from our main theorem only in the (worse) bound $c<4/3$.
\end{rem}

Finally, we state the following result, which allows replacing $\floor{n^c}$ by $\floor{n\alpha+\beta}$.
\begin{prop}\label{prp:PS_via_beatty_2}
Assume that $\varphi:\dN\ra\Omega$ is a function into a finite set $\Omega$ and assume that $\omega=(\omega_0,\ldots,\omega_{L-1})\in \Omega^L$, where $L\geq 1$ is an integer.
We write $f(x)=x^c$, where $1<c<2$ is a real number.
Let $\delta\in[0,1]$ and define, for real $N,K>0$,
\begin{multline}\label{eqn:essential_integral}
J(N,K)
=
\frac 1{f'(2N)-f'(N)}
\int_{f'(N)}^{f'(2N)}
\max_{f(N)<\beta\leq f(2N)}
\biggl\lvert
\frac 1K
\bigl\lvert\bigl\{
n<K:
\\
\varphi\bigl(\lfloor(n+\ell)\alpha+\beta\rfloor\bigr)=\omega_\ell\text{ for }0\leq \ell<L
\bigr\}\bigr\rvert
-\delta
\biggr\rvert
\,\mathrm d\alpha
.
\end{multline}
There exists a constant $C$ such that for all $N\geq 2$ and $K>0$ we have
\begin{multline}\label{eqn:block_substitution_rule}
    \biggl\lvert
        \frac 1N
        \bigl\lvert\bigl\{
           n\in (N,2N]:
           \varphi\bigl(\floor{f(n+\ell)}\bigr)=\omega_\ell
           \text{ for }0\leq \ell<L
        \bigr\}\bigr\rvert
      -
        \delta
    \biggr\rvert
\\
  \leq    C \biggl(  f''(N)K^2 + \frac{(\log N)^2}{K} + J(N,K)  \biggr).
\end{multline}
\end{prop}
Results on the distribution of values of an arithmetic function $\varphi:\dN\ra\Omega$ on \emph{Beatty} sequences can therefore be used for proving statements concerning $\varphi$ on \emph{Piatetski-Shapiro} sequences $n\mapsto \floor{n^c}$, at least in cases where the shift $\beta$ does not cause problems.
(This is the case for our problem concerning $\bt$, however, this proposition cannot be used for the original Piatetski-Shapiro problem, since our knowledge on primes in short intervals is not sufficient.)
\propref{prp:PS_via_beatty_2} is a modification of~\cite[Proposition~1]{S2014}, which, together with the statement by Fouvry and Mauduit~\cite[Th\'eor\`eme~3 and inequality~(1.5)]{FM96} asserting that
\begin{align}\label{eqn:FM}
\int_0^1\prod_{j<k}\bigl\lvert\sin\bigl(2^j\pi\theta\bigr)\bigr\rvert\,\mathrm d\theta    \sim    \kappa\lambda^k
\end{align}
for some $\kappa\in\dR$ and some $\lambda\in (0.6543,0.6632)$,
enabled the second author to obtain simple normality of $n\mapsto \bt(\floor{n^c})$ for $c\leq 1.42$.

The plan of the paper is as follows.
In Section~\ref{sec:lemmas} we state a number of lemmas.
In Section~\ref{sec:reduction} we show how to prove Theorems~\ref{thm:1}, \ref{thm:2} and~1 from Propositions~\ref{prp:1} and~\ref{prp:2}.
Section~\ref{sec:proof_prp1} is concerned with the proof of \propref{prp:1},
while Section~\ref{sec:proof_prp2}, which is shorter,
proves \propref{prp:2} in a way that is to a large extent analogous.
(This section is shorter because some parts that have been treated in detail in the first proof have been left out.
We also note that it would be possible to unify to a large extent the proofs of Propositions~\ref{prp:1} and~\ref{prp:2} by rewriting some sums as integrals with respect to some measure.
However, we refrained from doing so since we wanted to keep the presentation clear.)

The last two sections are dedicated to the proofs of Propositions~\ref{prp:uniform_fourier} and~\ref{prp:PS_via_beatty_2}.
\section{Lemmas}\label{sec:lemmas}
We begin with the following elementary facts about the functions $\floor{\cdot}$, $\norm{\cdot}$ and $\ang{\cdot}$, where $\ang{x}=\floor{x+1/2}$ (the ``nearest integer'' to $x$).
The (easy) proofs are left to the reader.
\begin{lem}\label{lem:fractional_part_facts}
Let $a,b\in\dR$ and $n\in\dN$.
\begin{enumerate}[(i)]
\item \label{item:fpf_1} If $\norm{a}<\varepsilon$ and $\norm{b}\geq \varepsilon$, then
$\floor{a+b}=\ang{a}+\floor{b}$.
\item \label{item:fpf_2} $\norm{na}\leq n\norm{a}$.
\item \label{item:fpf_3} If $\norm{a}<\varepsilon$ and $2n\varepsilon<1$, then
$\ang{na}=n\ang{a}$.
\end{enumerate}
\end{lem}
An essential tool in our proofs is the following generalization of van der Corput's inequality (see~\cite[Lemme 17]{MR2009}).
\begin{lem}\label{lem:vdc}
Let $I$ be a finite interval containing $N$ integers and
let $a_n$ be a complex number for $n\in I$.
For all integers $K\geq 1$ and $R\geq 1$ we have
\[
  \abs{\sum_{n\in I}a_n}^2
\leq
  \frac{N+K(R-1)}R
  \sum_{\lvert r\rvert<R}\biggl(1-\frac {\lvert r\rvert}R\biggr)
  \sum_{\substack{n\in I\\n+Kr\in I}}a_{n+Kr}\overline{a_n}
.
\]
In particular, the right hand side is a nonnegative real number.
\end{lem}
The following simple lemma will help us to remove the expression $\lfloor n\alpha+\beta\rfloor$, which appears in our calculations via linear approximation.
It introduces the \emph{discrepancy} of a sequence (modulo~$1$) instead.
\begin{lem}\label{lem:f_discrepancy}
Let $J$ be an interval in $\dR$ containing $N$ integers
and let $\alpha$ and $\beta$ be real numbers.
Assume that $t,T,k$ and $K$ are integers such that $0\leq t<T$ and $0\leq k<K$.
Then
\begin{multline*}
\biggl\lvert
  \biggl\{
    n\in J:
    \frac tT\leq \{n\alpha+\beta\}<\frac{t+1}T,
    \lfloor n\alpha+\beta \rfloor\equiv k\bmod K
  \biggr\}
\biggr\rvert
\\=
\frac N{KT} + O\Bigl(ND_N\Bigl(\frac \alpha K\Bigr)\Bigr),
\end{multline*}
where
\[ 
D_N(\alpha)=\sup_{\substack{0\leq x\leq 1\\y\in\dR}}\Biggl\lvert \frac 1N\sum_{n<N}c_{[0,x)+y+\dZ}(n\alpha)-x  \Biggr\rvert. 
\]
\end{lem}
\begin{proof}
We set                                     
$I=\bigl[(Tk+t)/(KT),(Tk+t+1)/(KT)\bigr)$, 
which is a subinterval of                  
$[0,1)$                                    
of length $1/(KT)$.
The two conditions
$t/T\leq \{n\alpha+\beta\}<(t+1)/T$ and
$\lfloor n\alpha+\beta\rfloor\equiv k\bmod K$
are satisfied if and only if
$\{n\alpha/K+\beta/K\}\in I$
and the lemma follows by inserting the definition of $D_N(\alpha)$.
\end{proof}
In order to handle the discrepancy (of $n\alpha$-sequences) thus introduced, we will use average results as in the following lemma.
\begin{lem}\label{lem:mean_discrepancy}
Let $J$ be a finite interval in $\dR$ containing $N$ integers.
Then
\begin{equation}\label{eqn:mean_geometric_sum}
\sum_{k<2^\rho}
\Biggl\lvert
  \sum_{j\in J}
  \e\biggl(\frac{jmk}{2^\rho}\biggr)
\Biggr\rvert
\ll
2^{\nu_2(m)}N+2^\rho \logp N
\end{equation}
for all integers $\rho\geq 0$ and $m\neq 0$.
For integers $\mu\geq 0$ and $N\geq 1$ we have
\begin{equation}\label{eqn:mean_discrepancy_sum}
\sum_{d<2^{\mu}}
D_N\biggl(\frac d{2^\mu}\biggr)
\ll
\frac{N+2^\mu}{N}\bigl(\logp N\bigr)^2.
\end{equation}
Moreover, the estimate
\begin{equation}\label{eqn:mean_discrepancy_int}
\int_0^1 D_N(\alpha)\,\mathrm d\alpha
\ll
\frac{\bigl(\logp N\bigr)^2}{N}
\end{equation}
holds. The implied constants in these three estimates are absolute.
\end{lem}
\begin{proof}
We prove the first claim.
The estimate is trivial for $N\leq 1$. We assume therefore that $N\geq 2$.
Let $0<a\leq b$. We have
\begin{align*}
\sum_{a\leq k<b}\frac 1k
&=
\sum_{\lfloor a\rfloor+1\leq k<\lfloor b\rfloor}\frac 1k+O(1)
=
\log(\lfloor b\rfloor)-\log(\lfloor a\rfloor+1)+O(1)
\\&\leq
\log b-\log a+O(1)
.
\end{align*}
Therefore we get for all integers $\rho\geq 1$
\begin{align*}
\sum_{k<2^\rho}
\min\Bigl\{N,\norm{k/2^\rho}^{-1}\Bigr\}
&=
2
\sum_{k<2^{\rho-1}}
\min\Bigl\{N,\lvert k/2^\rho\rvert^{-1}\Bigr\}
\\
&\ll
N\bigl\lvert\bigl\{k<2^{\rho-1}:k<2^\rho/N\bigr\}\bigr\rvert
+
2^\rho\sum_{2^\rho/N\leq k<2^{\rho-1}}\frac 1k
\\
&\ll
N(1+2^\rho/N)+2^\rho\bigl(1+\log 2^\rho-\log(2^\rho/N)\bigr)
\\
&\ll
N+2^\rho \logp N.
\end{align*}
This estimate is also valid for $\rho=0$.
Let $2^\eta\mid m$ and $2^{\eta+1}\nmid m$, that is, $\nu_2(m)=\eta$.
If $\eta\leq \rho$, we have
\begin{align*}
\sum_{k<2^\rho}
\min\Bigl\{N,\norm{km/2^\rho}^{-1}\Bigr\}
&=
2^\eta
\sum_{k<2^{\rho-\eta}}
\min\Bigl\{N,\norm{k/2^{\rho-\eta}}^{-1}\Bigr\}
\\&\ll
2^\eta N+2^\rho\logp N.
\end{align*}
Note that this estimate holds trivially for $\eta>\rho$.
The statement~\eqref{eqn:mean_geometric_sum} follows therefore from the inequality
\[
\Biggl\lvert\sum_{j\in J}\e\bigl(jmk/2^\rho\bigr)\Biggr\rvert
\leq \min\Bigl\{N,\norm{km/2^\rho}^{-1} \Bigr\}.
\]
In order to prove the first result on the average discrepancy,
we use the Erd\H{o}s--Tur\'an inequality and~\eqref{eqn:mean_geometric_sum} and obtain
\begin{align*}
N \sum_{d<2^{\mu}} D_N\biggl(\frac d{2^\mu}\biggr)
&\ll
2^\mu
+
\sum_{1\leq h\leq N}
\frac 1h
\sum_{d<2^\mu}
\Biggl\lvert
\sum_{n<N}
\e\biggl(\frac{hnd}{2^\mu}\biggr)
\Biggr\rvert
\\&\ll
2^\mu
+
\sum_{\rho\leq \frac{\log N}{\log 2}}
\sum_{\substack{1\leq h\leq N\\\nu_2(h)=\rho}}
\frac 1h
\bigl(
2^\rho N
+
2^\mu\logp N
\bigr)
\\&\ll
2^\mu+
\logp N
\sum_{\rho\leq \frac{\log N}{\log 2}}
\frac 1{2^\rho}
\bigl(
2^\rho N
+
2^\mu\logp N
\bigr)
\\&\ll
\bigl(N+2^\mu\bigr)\bigl(\logp N\bigr)^2.
\end{align*}
The proof of the last statement is analogous.
\end{proof}
The following lemma concerning the discrete Fourier transform can easily be proved using orthogonality relations.
\begin{lem}\label{lem:correlation_fourier}
Assume that $M\geq 1$ is an integer and that $f:\dZ\ra\dC$ is an $M$-periodic function.
Then
\begin{equation}\label{eqn:correlation_rep}
\frac 1M
\sum_{n<M}
f(n+t)\overline{f(n)}
=
\sum_{h<M}
\bigl\lvert\hat f(h)\bigr\rvert^2\e\p{ht/M}
,
\end{equation}
where
\[    \hat f(h) = \frac 1M\sum_{u<M}f(u)\e\p{-hu/M}.    \]
\end{lem}
We also need the following \emph{carry propagation lemma} for the sum-of-digits function.
Statements of this type were used in the articles~\cite{MR2009,MR2010} by
Mauduit and Rivat on the sum of digits of primes and squares.
\begin{lem}\label{lem:carry}
Let $r,L,N,\lambda$ be nonnegative integers and $\alpha>0,\beta\geq 0$ real numbers.
Assume that $I$ is an interval containing $N$ integers.
Then
\begin{align*}
  \bigl\lvert
    \bigl\{
      n\in I:
      \exists \ell\in[0,L)\text{ such that }
      s(\lfloor(n+\ell+r)\alpha+\beta\rfloor)-s(\lfloor(n+\ell)\alpha+\beta\rfloor)
\\&&
\mathmakebox[0pt][r]{
\neq
      s_\lambda(\lfloor(n+\ell+r)\alpha+\beta\rfloor)-s_\lambda(\lfloor(n+\ell)\alpha+\beta\rfloor)
      \bigr\}
  \bigr\rvert
}
\\&&
\mathmakebox[0pt][r]{
\leq
  (r+L)(N\alpha/2^\lambda+2).
}
\end{align*}
\end{lem}
\begin{proof}
Let $E=(r+L)\alpha$. The statement is trivial for $E\geq 2^\lambda$.
We assume therefore that $E<2^\lambda$.
Moreover, we may assume that $L\geq 1$, since the estimate is trivial for $L=0$.
We first note that if
\begin{equation}\label{eqn:carry_sufficient} 
n\alpha+\beta \in [0,2^\lambda-E)+2^\lambda\dZ, 
\end{equation}
then
\begin{multline*}
s(\lfloor(n+\ell+r)\alpha+\beta\rfloor)-s(\lfloor(n+\ell)\alpha+\beta\rfloor)
\\=s_\lambda(\lfloor(n+\ell+r)\alpha+\beta\rfloor)-s_\lambda(\lfloor(n+\ell)\alpha+\beta\rfloor)
\end{multline*}
for all $\ell<L$.
This follows easily by studying the binary representation of the arguments:
if hypothesis~\eqref{eqn:carry_sufficient} is satisfied, then
$(n+0)\alpha+\beta,\ldots,(n+L-1+r)\alpha+\beta$ are contained in an interval 
$[k2^\lambda,(k+1)2^\lambda-1)$,                                              
therefore the digits of
$\lfloor(n+\ell+r)\alpha+\beta\rfloor$ and $\lfloor(n+\ell)\alpha+\beta\rfloor$
with indices $\geq \lambda$ are the same, for all $\ell<L$.
It remains to count the number of exceptions to~\eqref{eqn:carry_sufficient}.

For $k\in\dZ$ let $a_k=\min\{n:k2^\lambda\leq n\alpha+\beta\}$
and $b_k=\min\{n:(k+1)2^\lambda-E\leq n\alpha+\beta\}$.
Then for $a_k\leq n<b_k$ we have 
$n\alpha+\beta\in[0,2^\lambda-E)+2^\lambda\dZ$. 
It is therefore sufficient to count the number of $n\in I$ such that
$b_k\leq n<a_{k+1}$ for some $k$.

Clearly we have $a_{k+1}-b_k=r+L$.
Assume that 
$I=[a,b)$ 
and choose $k$ in such a way that $k2^\lambda\leq a\alpha+\beta<(k+1)2^\lambda$.
Then $a_k\leq a$.
Moreover
$(b-1)\alpha+\beta
<
a\alpha+\beta+(b-a-1)\alpha
<
(k+1+(b-a-1)\alpha/2^\lambda)2^\lambda
<
(k+\lfloor N\alpha2^{-\lambda}\rfloor+2)2^\lambda
$,
therefore
$b-1<a_{k+\lfloor N\alpha2^{-\lambda}\rfloor+2}$.
It follows that the exceptional indices $n$ are contained in one of $\lfloor N\alpha2^{-\lambda}\rfloor+2$ intervals of length $r+L$.
\end{proof}
The following standard lemma allows us to extend the range
of a summation in exchange for a controllable factor.
\begin{lem}\label{lem:vinogradov}
Let $x\leq y\leq z$ be real numbers and $a_n\in\dC$ for $x\leq n<z$.
Then
\begin{equation*}
    \abs{\sum_{x\leq n<y}a_n}
  \leq
    \int_0^1{
      \min\left\{y-x+1,\norm{\xi}^{-1}\right\}
      \abs{\sum_{x\leq n<z}a_n\e\p{n\xi}}
    }
    \,\mathrm d \xi
.
\end{equation*}
\end{lem}
\begin{proof}
Since
$\int_0^1\e\p{k\xi}\,\mathrm d \xi=\delta_{k,0}$ for $k\in\dZ$
we have
\begin{equation*}
    \sum_{x\leq n<y}a_n
  =
    \sum_{x\leq n<z}a_n\sum_{x\leq m<y}\delta_{n-m,0}
  =
    \int_0^1
    \sum_{x\leq m<y}\e\p{-m\xi}
    \sum_{x\leq n<z}a_n\e\p{n\xi}
    \,\mathrm d \xi,
\end{equation*}
from which the statement follows.
\end{proof}
Let $\mathcal F_n$ be the \emph{Farey series of order} $n$, by which we understand the set of rational numbers $p/q$ such that $1\leq q\leq n$.
It is easy to see that each $a\in \mathcal F_n$ has two \emph{neighbours} $a_L,a_R\in \mathcal F_n$, satisfying $a_L<a<a_R$ and $(a_L,a)\cap \mathcal F=(a,a_R)\cap\mathcal F=\emptyset$.
We have the following elementary lemma concerning this set, which follows from the theorems in chapter~3 of the book~\cite{HW54} by Hardy and Wright.
\begin{lem}\label{lem:farey}
Assume that $a/b$, $c/d$ are reduced fractions such that $b,d>0$ and $a/b<c/d$.
Then $a/b<(a+c)/(b+d)<c/d$.
If $a/b$ and $c/d$ are neighbours in $\mathcal F_n$, then
$bc-ad=1$ and $b+d>n$,
moreover
\[    (a+c)/(b+d)-a/b < \frac 1{bn}    \]
and
\[    c/d-(a+c)/(b+d) < \frac 1{dn}.    \]
\end{lem}
We will also use the large sieve inequality, which we state here in the form provided by Selberg (see for example Montgomery~\cite[Theorem 3]{M78}).
\begin{lem}[Selberg]\label{lem:large_sieve}
Let $N\geq 1,R\geq 1$ and $M$ be integers,
$\alpha_1,\ldots,\alpha_R\in\dR$ and
$a_{M+1},\ldots,$ $a_{M+N}\in\dC$.
Assume that $\norm{\alpha_r-\alpha_s}\geq \delta$ for $r\neq s$, where $\delta>0$.
Then
\[
\sum_{r=1}^R
\abs{
\sum_{n=M+1}^{M+N}
a_n\e(n\alpha_r)
}^2
\leq
(N-1+\delta^{-1})
\sum_{n=M+1}^{M+N}
\abs{a_n}^2
.
\]
\end{lem}
\section{Reduction of the problem}\label{sec:reduction}
Before proving Propositions~\ref{prp:1} and~\ref{prp:2}, we want to show that these results imply Theorems~\ref{thm:1} and~\ref{thm:2} respectively.
\subsection{Reducing Theorems~\ref{thm:1} and~\ref{thm:2} to Propositions~\ref{prp:1} and~\ref{prp:2}}
We only deduce \thmref{thm:1} from \propref{prp:1}.
The second implication can be shown in an analogous way.

Assume that the statement of \propref{prp:1} holds for some $\rho_1$, $\rho_2$ such that $0<\rho_1\leq\rho_2$.
Note that we allow $\rho_i\geq 2$ to keep the proof more general.
We want to show that in \thmref{thm:1} we may choose
$\delta_1=\rho_1/(\rho_1+1)$ and $\delta_2=\rho_2/(\rho_2+1)$.
Choosing $\rho_2$ close to $2$, justified by \propref{prp:1},
we see that $\delta_2$ approaches $2/3$.

For real numbers $x,D\geq 1$ we define
\begin{equation}\label{eqn:S0_definition}
S_0 = S_0(x,D)
=
\sum_{D<d\leq 2D}
\max_{\substack{y,z\\0\leq y\leq z\\z-y\leq x}}
\max_{j\in\dZ}
\abs{A_\omega(y,z;d,j)-\frac {z-y}{2^Ld}}
.
\end{equation}
Let $x\geq 1$ and $D$ be real numbers such that
$x^{\delta_1}\leq D\leq x^{\delta_2}$.
We rewrite the difference appearing in $S_0$ to exponential sums, using orthogonality relations:
\begin{align*}
\hspace{1em}&\hspace{-1em} A_\omega(y,z;d,j) -\frac {z-y}{2^Ld}
\\&=
\sum_{\substack{m,n\\y\leq m<z\\m=nd+j}}
\left\{\begin{array}{ll}1&\text{if }s((n+\ell)d+j)\equiv \omega_\ell\bmod 2\text{ for }\ell<L\\0&\text{otherwise}\end{array}\right\}
-\frac {z-y}{2^Ld}
\\&=
\sum_{n<\frac{z-y}{d}}
\left\{\begin{array}{ll}1&\text{if }s((n+\ell)d+j+\floor{(y-j)/d}d)\equiv \omega_\ell\bmod 2\text{ for }\ell<L\\0&\text{otherwise}\end{array}\right\}
\\&\qquad-
\frac {z-y}{2^Ld}
+O(1)
\\&=
\frac 1{2^L}
\sum_{\substack{a\in\{0,1\}^L\\a\neq (0,\ldots,0)}}
\e\p{-\frac 12(a_0\omega_0+\cdots+a_{L-1}\omega_{L-1}) }
\\&\qquad
\times
\sum_{n<(z-y)/d}
\e\p{\frac 12\sum_{\ell<L}
a_\ell s((n+\ell)d+j+\floor{(y-j)/d}d)
}
+O(1).
\end{align*}
It follows that
\[
S_0
\leq
\frac 1{2^L}
\sum_{\substack{a\in\{0,1\}^L\\a\neq (0,\ldots,0)}}
\sum_{D\leq d<2D}
\max_{u\leq x}
\max_{j\geq 0}
\abs{
  \sum_{n<u/d}
  \e\p{\frac 12\sum_{\ell<L}a_\ell s((n+\ell)d+j)}
}
+O(D)
.
\]
In order to dispose of the maximum over $u$, we apply \lemref{lem:vinogradov}.
We exchange the appearing integral with the maximum over $j$ and with the sum over $d$:
\begin{multline}\label{eqn:S0_estimate}
S_0(x,D)
\leq
\frac 1{2^L}
\sum_{\substack{a\in\{0,1\}^L\\a\neq (0,\ldots,0)}}
\int_0^1
\min\left\{\frac xD+1,\norm{\xi}^{-1}\right\}
\\
\times
\sum_{D\leq d<2D}
\max_{j\geq 0}
\abs{
  \sum_{n<x/D}
  \e\p{\frac 12\sum_{\ell<L}a_\ell s((n+\ell)d+j)}
  \e\p{n\xi}
}
\,\mathrm d\xi
+O(D)
.
\end{multline}
\propref{prp:1} therefore implies that there exist a constant $C$ and an exponent $\eta>0$ such that for all $a_0,\ldots,a_{L-1}\in\{0,1\}$, not all equal to zero, for all real numbers $x,D\geq 1$ satisfying
\[    \p{\frac xD}^{\rho_1}\leq D\leq \p{\frac xD}^{\rho_2},    \]
and for all $\xi\in\dR$ we have
\begin{equation}\label{eqn:critical_estimate}
\sum_{D\leq d<2D}
\max_{j\geq 0}
\abs{
  \sum_{n<x/D}
  \e\p{\frac 12\sum_{\ell<L}a_\ell s((n+\ell)d+j)}
  \e\p{n\xi}
}
\leq C D \p{\frac xD}^{1-\eta}
.
\end{equation}
The condition on $D$ can be rewritten as
$x^{\delta_1}\leq D\leq x^{\delta_2}$.
By the estimate
\[
\int_0^1
\min\left\{A,\norm{\xi}^{-1}\right\}\,\mathrm d \xi
=
2\p{\int_0^{1/A}A\,\mathrm d \xi+\int_{1/A}^{1/2}\xi^{-1}\,\mathrm d\xi}
\ll \log A,
\]
which holds for $A\geq 2$,
and by~\eqref{eqn:S0_estimate} and~\eqref{eqn:critical_estimate},
there exists some $\eta_1>0$ and constants $C$ and $C_1$ such that for all $x,D\geq 1$ satisfying $x^{\delta_1}\leq D\leq x^{\delta_2}$ we have
\[
S_0(x,D)
\leq
Cx\p{\frac Dx}^\eta\logp x
+O(D)
\leq C_1x^{1-\eta_1},
\]
which proves the theorem.
\subsection{Proving \thmref{thm:main} from \thmref{thm:2} and \propref{prp:PS_via_beatty_2}}\label{subsec:nc}
We show the more general implication that if the statement of \thmref{thm:2} holds for
some $0<\delta_1\leq\delta_2<1$, then we may choose $c<2/(2-\delta_2)$ in \thmref{thm:main}.
Choosing $\delta_2$ close to $2/3$ yields the desired statement.
We have to find an estimate for $J(N,K)$ defined in~\eqref{eqn:essential_integral}.
Therefore we calculate:
\begin{align*}
\hspace{4em}&\hspace{-4em}\bigl\lvert\{n<K:s(\floor{(n+\ell)\alpha+\beta})\equiv \omega_\ell\bmod 2\text{ for }\ell<L\}\bigr\rvert\\
&=
\bigl\lvert\{\floor{\beta}\leq m<\floor{K\alpha+\beta}:\exists n\in\dZ:m=\lfloor n\alpha+\beta\rfloor,
\\&&\mathmakebox[0pt][r]{
s(\floor{(n+\ell)\alpha+\beta})\equiv\omega_\ell\bmod 2\text{ for }\ell<L\}\bigr\rvert}
\\&=
\bigl\lvert\{\beta\leq m<K\alpha+\beta:\exists n\in\dZ:m=\lfloor n\alpha+\beta\rfloor,
\\&&\mathmakebox[0pt][r]{
s(\floor{(n+\ell)\alpha+\beta})\equiv\omega_\ell\bmod 2\text{ for }\ell<L\}\bigr\rvert+O(1)}
\\&=
A_\omega(\beta,K\alpha+\beta;\alpha,\beta)+O(1)
.
\end{align*}
We use the definition~\eqref{eqn:essential_integral}, where we set $\delta=2^{-L}$,
and define $D=f'(N)$. Noting that $f'(2N)=2^{c-1}D\leq 2D$, we obtain
\begin{align*}
J(N,K)
&\leq
\frac 1{f'(2N)-f'(N)}
\frac 1K
\int_{f'(N)}^{f'(2N)}
\max_{\beta\geq 0}
\biggl\lvert A_\omega(\beta,K\alpha+\beta;\alpha,\beta)
\\&&\mathmakebox[0pt][r]{
-\frac{K\alpha+\beta-\beta}{2^L\alpha}\biggr\rvert
\,\mathrm d\alpha
+O(1/K)}
\\&\leq
\frac 1{(2^{c-1}-1)DK}
\int_D^{2D}
\max_{\substack{y,z\\0\leq y\leq z\\z-y\leq 2DK}}\max_{\beta\geq 0}
\biggl\lvert A_\omega(y,z;\alpha,\beta)
\\&&\mathmakebox[0pt][r]{
-\frac{z-y}{2^L\alpha}
\biggr\rvert
\,\mathrm d\alpha
+O(1/K)}
.
\end{align*}
It follows from \thmref{thm:2} that for
$(2DK)^{\delta_1}\leq D\leq(2DK)^{\delta_2}$,
that is, for $\tfrac 12D^{1/\delta_2-1}\leq K\leq \tfrac 12D^{1/\delta_1-1}$, we have
\[    J(N,K) \leq \frac C{DK} (2DK)^{1-\eta}+O(1/K)    \]
for some $\eta>0$ and $C$ depending on $c$ and $L$.
Setting $K=\tfrac 12D^{1/\delta_2-1}$, we obtain
\[
J(N,K)\leq CD^{-\eta/\delta_2}+2D^{1-1/\delta_2}
.
\]
By \propref{prp:PS_via_beatty_2} we get
\begin{align*}
\hspace{4em}&\hspace{-4em}
    \abs{
        \frac 1N
        \bigl\lvert\bigl\{
          n\in (N,2N]:
          s(\floor{(n+\ell)^c})=\omega_\ell
          \text{ for }0\leq \ell<L
        \bigr\}\bigr\rvert
      -
        \frac 1{2^L}
    }
\\&\leq
  C_1\p{
      f''(N)K^2
    +
      \frac{(\log N)^2}{K}
    +
      J(N,K)
  }
\\&\leq
  C_2\p{
      N^{c-2+2(c-1)(1/\delta_2-1)}
    +
      \frac{(\log N)^2}{N^{(c-1)(1/\delta_2-1)}}
    +
      N^{-\eta(c-1)/\delta_2}
  }
.
\end{align*}
All of the occurring exponents of $N$ are negative by the conditions $c<2/(2-\delta_2)$
and $0<\delta_2<1$, which proves \thmref{thm:main}.
\section{Proof of \propref{prp:1}}\label{sec:proof_prp1}
Assume that $L\geq 1$ is an integer, $a=(a_0,\ldots,a_{L-1})\in\{0,1\}^L$ and that $a_\ell=1$ for some $\ell$.
It is easy to see, using the shift $j$, that we may assume $a_0=1$.
We also assume that $N\geq 1$ is an integer;
the general statement follows from the estimate $S_1(N,D,\xi)-S_1(\floor{N},D,\xi)\ll D$.
Moreover, it is sufficient to prove the statement
\[    \frac{S_1(N,2^\nu,\xi)}{N2^\nu}\leq CN^{-\eta}    \]
for all integers $N,\nu\geq 1$ and real numbers $D\geq 1$ such that
$N^{\rho_1}\leq D\leq N^{\rho_2}$ and $D<2^\nu\leq 2D$.
This can be seen by considering sums (in $d$) over the intervals 
$[2^{\nu-1},2^\nu)$ and $[2^\nu,2^{\nu+1})$ 
and using the estimate
$S_1(N,2^{\nu-1},\xi)\leq S_1(N,2^\nu,\xi)$,
which follows from the identity $s(2m)=s(m)$.
Choose $\eta$ and $C$ according to \propref{prp:uniform_fourier}.
Moreover, let $\tau=2m$, where $2^{m-5}\leq L<2^{m-4}$, and $\lambda\geq 0$.
By the Cauchy--Schwarz inequality we have
\begin{equation}\label{eqn:S1_S2}
\abs{S_1(N,2^\nu,\xi)}^2
\leq
2^\nu
\sum_{2^\nu\leq d<2^{\nu+1}}
\max_{j\geq 0}
S_2(N,d,j,\xi)
,
\end{equation}
where
\[
S_2 = S_2(N,d,j,\xi)
=
\abs{
  \sum_{n<N}{
    \e\p{
      \frac 12
      \sum_{\ell<L}
      a_\ell
      s((n+\ell)d+j)
    }
    \e(n\xi)
  }
}^2
.
\]
We apply \lemref{lem:vdc} (the generalized inequality of van der Corput) with $K=2^\tau$.
Let $R\geq 1$ be an integer. Then
\begin{multline*}
S_2
\leq
\frac{N+2^\tau(R-1)}{R}
\sum_{\abs{r}<R}
\p{1-\frac{\abs{r}}{R}}
\e(r2^\tau\xi)
\\
\times
\sum_{0\leq n,n+r2^\tau<N}
\e\p{\frac 12\sum_{\ell<L}a_\ell\bigl(s((n+\ell+r2^\tau)d+j)-s((n+\ell)d+j)\bigr)}
.
\end{multline*}
Using \lemref{lem:carry} and treating the summand $r=0$ separately,
moreover omitting the condition $0\leq n+r2^\tau<N$,
we obtain for all $\lambda\geq 0$
\begin{multline*}
S_2
\ll
O\p{\frac{N^2}R+NR2^\tau+N^2\frac{R2^\tau d}{2^\lambda}}
\\+
\frac NR
\sum_{1\leq r<R}
\abs{
  \sum_{n<N}
  \e\p{
    \frac 12
    \sum_{\ell<L}
    a_\ell
    \bigl(
      s_\lambda((n+\ell+r2^\tau)d+j)
      -
      s_\lambda((n+\ell)d+j)
    \bigr)
  }
}
\end{multline*}
with an implied constant depending only on the block length $L$.
Note that we also replaced $N+(R-1)2^\tau$ by $N$;
this is clearly admissible if $R2^\tau\leq N$, otherwise we use the trivial estimate $\abs{S_2}\leq N^2$ and note the presence of the error term $O(NR 2^\tau)$.

We set $a_\ell$ to $0$ for $\ell\not\in\{0,\ldots,L-1\}$
and define $b^r_\ell=a_{\ell-r2^\tau}-a_\ell$.
Applying the Cauchy--Schwarz inequality twice and using~\eqref{eqn:S1_S2} gives
\begin{multline}\label{eqn:S1_estimate}
\abs{S_1(N,2^\nu,\xi)}
^2
\ll
(2^\nu N)^2 O\p{\frac 1R+\frac{R2^\tau 2^\nu}{2^\lambda}+\frac {R2^\tau}N}
\\+
2^{3\nu/2}N
\p{
\frac 1R
\sum_{1\leq r<R}
S_3(N,r,\lambda,\nu)
}^{1/2}
,
\end{multline}
where
\begin{equation}\label{eqn:S3_definition}
S_3(N,r,\lambda,\nu)
=
\sum_{2^\nu\leq d<2^{\nu+1}}
\max_{j\geq 0}
\abs{
  \sum_{n<N}
  \e\p{
    \frac 12
    \sum_{\ell\in\dZ}
    b^r_\ell
    s_\lambda((n+\ell)d+j)
  }
}^2
.
\end{equation}
Applying Lemma \ref{lem:vdc} for $K=2^\mu$, we obtain for all integers $M\geq 1$
and $\mu\geq 0$
\begin{multline}\label{eqn:S3_summand_estimate}
\abs{
  \sum_{n<N}
  \e\p{
    \frac 12
    \sum_{\ell\in\dZ}
    b^r_\ell
    s_\lambda((n+\ell)d+j)
  }
}^2
\leq
\frac{N+2^\mu(M-1)}{M}
\sum_{\abs{m}<M}
\p{1-\frac{\abs{m}}M}
\\
\times
\sum_{0\leq n,n+m2^\mu<N}
\e\p{
  \frac 12\sum_{\ell\in\dZ}
  b^r_\ell
  \bigl(
      s_{\mu,\lambda}((n+\ell+m2^\mu)d+j)
    -
      s_{\mu,\lambda}((n+\ell)d+j)
  \bigr)
}
\\
\ll
N\abs{S_4}
+
2^\mu MN
,
\end{multline}
where
\begin{align*}
S_4 &= S_4(N,M,d,j,r,\mu,\lambda)
=
\frac 1M
\sum_{\abs{m}<M}
\p{1-\frac {\abs{m}}M}
\\
&\times
\sum_{n<N}
\e\Biggl(
  \frac 12\sum_{\ell\in\dZ}
  b^r_\ell
  \biggl(
      s_{\lambda-\mu}\p{\floor{\frac{(n+\ell)d+j}{2^\mu}}+md}
\\&&
\mathmakebox[0pt][r]{
    -
      s_{\lambda-\mu}\p{\floor{\frac{(n+\ell)d+j}{2^\mu} } } 
  \biggr)
\Biggr)
.
}
\end{align*}
The replacement of $s_\lambda$ by the two-fold restricted sum-of-digits function
$s_{\mu,\lambda}$, which we performed in~\eqref{eqn:S3_summand_estimate}, is admissible since the arguments differ by a multiple of $2^\mu$ and therefore the difference does not depend on the lower digits.
We obtain
\begin{equation}\label{eqn:S3_estimate}
S_3
\ll
N
\sum_{2^\nu\leq d<2^{\nu+1}}
\max_{j\geq 0}
\abs{S_4}
+
2^{\mu+\nu}MN
.
\end{equation}
The rough idea at this point is to estimate $S_4$ by a nonnegative real number independent of $j$, which will allow us to remove the maximum over $j$ and the absolute value appearing in~\eqref{eqn:S3_estimate}. In the following we will work out the details of this process.
We want to split the summation over $N$ into $T$ parts,
according to the fractional part of $(nd+j)2^{-\mu}$.
Let $t,T$ be integers such that $0\leq t<T$.
We define
\begin{align*}
S_5 &= S_5(N,T,a,d,j,r,m,t,\mu,\lambda)
\\&=
\sum_{
  \substack{
     n<N
  \\
      \frac tT
    \leq
      \left\{
        \frac{nd+j}{2^\mu}
      \right\}
    <
      \frac{t+1}T
  }
}
\e\Biggl(
  \frac 12\sum_{\ell\in\dZ}
  b^r_\ell
  \biggl(
      s_{\lambda-\mu}\p{\floor{\frac{(n+\ell)d+j}{2^\mu} }+md}
\\&&\mathmakebox[0pt][r]{
    -
      s_{\lambda-\mu}\p{\floor{\frac{(n+\ell)d+j}{2^\mu} } }
  \biggr)
\Biggr)
}
.
\end{align*}
We will see that, for most values of $d$,
the values of the floor function distribute evenly modulo $2^{\lambda-\mu}$
as $n$ runs through the set defined by the two conditions under the summation sign.
For this to hold, we have to assure that $N>2^{\lambda-\mu}$.
Inspecting the error terms in~\eqref{eqn:S1_estimate} and~\eqref{eqn:S3_estimate},
we see that we also need $2^\mu<N$ and $\nu<\lambda$ in order to get a nontrivial estimate.
These observations ultimately lead to the restriction $\rho_2<2$ in \propref{prp:1}.

The idea behind the decomposition into $T$ subintervals 
$[t/T,(t+1)/T)$                                         
of                                                      
$[0,1)$                                                 
is the following.
Let $A_t$ be the set of $n$ such that $\{(nd+j)/2^\mu\}$ lies in the $t$-th interval.
Then the differences
\[
\floor{\frac{(n+1)d+j}{2^\mu}}-
\floor{\frac{nd+j}{2^\mu}}
,\ldots,
\floor{\frac{(n+L-1)d+j}{2^\mu}}-
\floor{\frac{nd+j}{2^\mu}}
\]
should not depend on $n\in A_t$.
This will in fact be the case for most, but not for all, $t$, so that we have to take out some $t$.
We define a set of ``good'' values,
\begin{multline*}
G = G(T,d,r,\mu)
=
\bigg\{t<T:
  \left[
    \frac tT+\frac {\ell d}{2^\mu},\frac{t+1}T+\frac{\ell d}{2^\mu}
  \right)
\cap
  \dZ
=
  \emptyset
\\
  \text{
    for all
  }
 \ell\in [0,L)\cup[r2^\tau,r2^\tau+L)
\bigg\}.
\end{multline*}
We have
\begin{equation}\label{eqn:exceptional_intervals}
\abs{G}
\geq
T-2L
,
\end{equation}
since the intervals in the definition of $G(d,T,R,\mu)$ are disjoint and cover an interval of length $1$,
therefore we have to exclude at most one integer $t$ for each $\ell$.

We differentiate between the cases $t\in G$ and $t\not\in G$.
For $t\not\in G$ we estimate the sum in $S_5$ trivially, that is, we count the number of summands, using \lemref{lem:f_discrepancy}.
We apply this lemma for $K=2^{\lambda-\mu}$
(note that we could also take $K=1$, however, our choice spares us the separate treatment of an error term)
and multiply with $2^{\lambda-\mu}$,
which accounts for the $2^{\lambda-\mu}$ residue classes we have to collect.
We obtain
\begin{equation}\label{eqn:S5_trivial_estimate}
S_5
\ll
\frac NT
+2^{\lambda-\mu}ND_N\p{\frac {d}{2^\lambda} }
.
\end{equation}
Let $t\in G$ and assume that
$t/T\leq \{(nd+j)2^{-\mu}\}<(t+1)/T$.
By the second assumption we obtain
\[
\floor{\frac{nd+j}{2^\mu}}+\frac tT+\frac{\ell d}{2^\mu}
\leq
\frac{(n+\ell)d+j}{2^\mu}
<
\floor{\frac{nd+j}{2^\mu}}+\frac{t+1}T+\frac{\ell d}{2^\mu}
\]
and using the first assumption yields
\[
\floor{\frac{(n+\ell)d+j}{2^\mu}}
=
\floor{\frac{nd+j}{2^\mu}}+\floor{\frac tT+\frac{\ell d}{2^\mu}}
\]
for all 
$\ell\in [0,L)\cup[r2^\tau,r2^\tau+L)$. 
For $t\in G$ we obtain therefore
\begin{multline*}
S_5
=
\sum_{k<2^{\lambda-\mu}}
\sum_{
  \substack{
    n<N
  \\
      \frac tT
    \leq
      \left\{
        \frac{nd+j}{2^\mu}
      \right\}
    <
      \frac{t+1}T
  \\
    \floor{\frac{nd+j}{2^\mu}}\equiv k\bmod 2^{\lambda-\mu}
  }
}
\e\Biggl(
  \frac 12\sum_{\ell\in\dZ}
  b^r_\ell
  \biggl(
      s_{\lambda-\mu}\p{
        k
        +
        \floor{\frac tT+\frac{\ell d}{2^\mu}}
        +
        md
      }
\\
    -
      s_{\lambda-\mu}\p{
        k
        +
        \floor{\frac tT+\frac{\ell d}{2^\mu}}
      }
  \biggr)
\Biggr)
\end{multline*}
Since the summand does not depend on $n$,
we count the number of times the three conditions under the second summation sign
are satisfied.
To this end, we use again \lemref{lem:f_discrepancy} with $K=2^{\lambda-\mu}$.
We obtain for $t\in G$
\begin{multline}\label{eqn:S5_relevant_estimate}
S_5
=
\frac{N}{2^{\lambda-\mu}T}
\sum_{k<2^{\lambda-\mu}}
\e\Biggl(
  \frac 12\sum_{\ell\in\dZ}
  b^r_\ell
  \biggl(
      s_{\lambda-\mu}\p{
        k+\floor{\frac tT+\frac{\ell d}{2^\mu}}+md
      }
\\
    -
      s_{\lambda-\mu}\p{
        k+\floor{\frac tT+\frac{\ell d}{2^\mu}}
      }
  \biggr)
\Biggr)
+O\p{2^{\lambda-\mu}ND_N\p{\frac{d}{2^\lambda} } }
.
\end{multline}
Note that this expression is independent of the shift $j$.
Moreover, as we noted earlier, we see that it is necessary that we have $N\geq 2^{\lambda-\mu}$ in order to get a useful result,
since the error term would be too large otherwise.
Setting
\[    i^{d,t}_\ell  =  \floor{ \frac tT+\ell\left\{\frac d{2^\mu}\right\} },    \]
we get the almost trivial identity
\[    \floor{\frac tT+\frac{\ell d}{2^\mu}}  =  \ell\floor{\frac d{2^\mu}} + i^{d,t}_\ell.    \]
In \lemref{lem:correlation_fourier} we set $t=md$ and
\[  f(n)=\e\p{\frac 12\sum_{\ell\in\dZ}b^r_\ell s_{\lambda-\mu}\p{n+\ell\floor{\frac d{2^\mu}}+i^{d,t}_\ell}}  \]
and obtain for $t\in G$
\begin{equation}\label{eqn:S5_representation}
S_5
=
  \frac NT
  \sum_{h<2^{\lambda-\mu}}
  \abs{G_{\lambda-\mu}^{i^{d,t},b^r}\p{h,\floor{\frac d{2^\mu} } } }^2
  \e\p{\frac{hmd}{2^{\lambda-\mu} } }
+
  O\p{2^{\lambda-\mu}ND_N\p{\frac d{2^\lambda} } }
,
\end{equation}
where the Fourier coefficients $G(h,d)$ are defined by~\eqref{eqn:G_definition}.
By the definitions of $S_4$ and $S_5$ we have
\begin{equation}\label{eqn:S4_representation}
S_{4}
=
\frac 1M
\sum_{\abs{m}<M}
\p{1-\frac{\abs{m}}M}
\sum_{t<T}
S_5
.
\end{equation}
Using~\eqref{eqn:exceptional_intervals}, \eqref{eqn:S5_trivial_estimate}, \eqref{eqn:S5_representation} and~\eqref{eqn:S4_representation} we obtain
\begin{multline}\label{eqn:S4_estimate}
S_{4}
=
\frac {N}{T}
\sum_{t<T}
\frac 1M
\sum_{\abs{m}<M}
\p{1-\frac{\abs{m}}M}
\sum_{h<2^{\lambda-\mu}}
\abs{G_{\lambda-\mu}^{i^{d,t},b^r}\left(h,\floor{\frac d{2^\mu}}\right) }^2
\e\p{\frac{hmd}{2^{\lambda-\mu} } }
\\
+
O\p{
    N\frac LT
  +
    2^{\lambda-\mu}
    TND_N\p{\frac{d}{2^\lambda} }
}
,
\end{multline}
where the reinsertion of the indices $t\not\in G$ is accounted for by the error term $NL/T$, which can be seen using Parseval's identity.

Note the important fact that the right hand side gives an estimate for $S_4$ that is independent of the shift $j$
(that is, independent of the residue class modulo $d$).
Using also the nonnegativity of the main term,
which follows from the elementary identity
\[    \sum_{\abs{m}<M}(M-\abs{m})\e(mx)=\abs{\sum_{m<M}\e(mx)}^2,    \]
we may remove the maximum together with the absolute value in~\eqref{eqn:S3_estimate}
while keeping the important factor $\e\p{hmd/2^{\lambda-\mu}}$
in~\eqref{eqn:S4_estimate}.
We obtain, treating the summand $m=0$ separately,
\begin{multline}\label{eqn:S3_estimate_2}
S_3
\ll
\frac {N^2}{T}
\sum_{t<T}
\frac 1M
\sum_{1\leq \abs{m}<M}
\p{1-\frac{\abs{m}}M}
S_6
\\+
2^\nu N^2
O\Biggl(
    \frac 1M
  +
    \frac{2^{\mu}M}N
  +
    \frac LT
  +
    2^{\lambda-\mu}T
    \frac 1{2^\nu}
    \sum_{d<2^{\nu+1}}
    D_N\p{\frac d{2^\lambda} }
\Biggr)
,
\end{multline}
\vspace{-1em}where
\begin{multline}\label{eqn:S6_definition}
S_6 = S_6(d,r,m,t,\lambda,\mu,\nu)
\\=
\sum_{2^\nu\leq d<2^{\nu+1}}
\sum_{h<2^{\lambda-\mu}}
\abs{G_{\lambda-\mu}^{i^{d,t},b^r}\left(h,\floor{\frac d{2^\mu}}\right) }^2
\e\p{\frac{hmd}{2^{\lambda-\mu} } }
.
\end{multline}
We want to show that $S_6$ is substantially smaller than $2^\nu$ (which is a trivial upper bound by Parseval's identity).
In order to estimate the right hand side of~\eqref{eqn:S6_definition}, we note that the first factor depends only in a weak way on $d$.
We state this more precisely in the following.

The term $\floor{d/2^\mu}$ does not depend on the lowest $\mu$ binary digits of $d$.
Moreover, we want to decompose 
$[0,2^\mu)$                    
into few intervals $I^{r,t}_u$ in such a way that the values
$i^{d,t}_\ell$, where $\ell\in M$ and 
$M=[0,L)\cup [r2^\tau,r2^\tau+L)$,    
are constant for $d\in I^{r,t}_u$.
(Note that the indices $\ell\not\in M$ are not of interest, since $b^r_\ell=0$ for these.)

Let $t<T$ be given.
We define the (lexicographical) order on $\dN^M$ by
$(i_\ell)_{\ell\in M}<(j_\ell)_{\ell\in M}$ if and only if
$i_\ell\neq j_\ell$ for some $\ell\in M$ and
$i_\ell<j_\ell$ for $\ell=\min\{k\in M:i_k\neq j_k\}$.
It is easy to check that the assignment
\[d\mapsto i^{d,t}\mid_M\]
is $2^\mu$-periodic and nondecreasing for $d<2^\mu$ with respect to this total ordering.
It follows that the set $\{0,\ldots,2^\mu-1\}$ decomposes into intervals
$I^{r,t}_0,\ldots,I^{r,t}_{U-1}$ such that the same sequence
$\bigl(i^{d,t}_\ell\bigr)_{\ell\in M}$ is defined for each $d\in I^{r,t}_u+2^\mu\dZ$.
By the property $0\leq i^{r,t}_{\ell+1}-i^{r,t}_\ell\leq 1$,
the number $U$ of intervals thus defined satisfies
\begin{equation}\label{eqn:U_estimate}
U\leq 2^{2L}(r2^\tau-L)\leq r2^{2L+\tau}.
\end{equation}
From~\eqref{eqn:S6_definition}, we obtain sums of the form
\[    \sum_{d\in I^{r,t}_u+2^\mu k}\e\p{hmd/2^{\lambda-\mu}}    \]
for $u<U$ and some $k\in\dZ$.
Using also the estimate for the Fourier coefficients $G_{\lambda-\mu}$ from \propref{prp:uniform_fourier}, we will estimate $S_6$ nontrivially.

Let $\mathcal I=\mathcal I_{L+r2^\tau-1}$ be the set of sequences $i_0,\ldots,i_{L+r2^\tau-1}$ such that $i_0=0$ and $0\leq i_{\ell+1}-i_\ell\leq 1$ for $0\leq \ell<L+r2^\tau-1$.
Assume that $m\neq 0$ and $\lambda\geq \nu\geq \mu$.
Writing $d=d_1+2^\mu d_2$, and choosing $i_u$ such that $i_u=i^{d_1,t}$ for all $d_1\in I_u^{r,t}$, we obtain
\begin{align*}
\bigl\lvert S_6\bigr\rvert
&=
\abs{
\sum_{u<U}
\sum_{d_1\in I^{r,t}_u}
\sum_{d_2<2^{\nu-\mu}}
\sum_{h<2^{\lambda-\mu}}
\abs{G_{\lambda-\mu}^{i^{d_1,t},b^r}\p{h,d_2} }^2
\e\p{\frac{hm(d_1+d_2 2^\mu)}{2^{\lambda-\mu} } }
}
\\&=
\abs{
\sum_{u<U}
\sum_{d_2<2^{\nu-\mu}}
\sum_{h<2^{\lambda-\mu}}
\abs{G_{\lambda-\mu}^{i_u,b^r}\p{h,d_2} }^2
\sum_{d_1\in I^{r,t}_u}
\e\p{\frac{hm(d_1+d_2 2^\mu)}{2^{\lambda-\mu} } }
}
\\&\leq
\sum_{u<U}
\max_{i\in \mathcal I}
\sum_{d_2,h<2^{\lambda-\mu}}
\abs{G_{\lambda-\mu}^{i,b^r}\p{h,d_2} }^2
\abs{
  \sum_{d_1\in I^{r,t}_u}
  \e\p{\frac{hmd_1}{2^{\lambda-\mu} } }
}.
\end{align*}
We apply the Cauchy--Schwarz inequality to the sum over $h$ and $d_2$ and obtain with the help of Parseval's identity
\begin{align}
\bigl\lvert S_6\bigr\rvert
&\leq
\sum_{u<U}
\max_{i\in \mathcal I}
\p{
  \sum_{d_2,h<2^{\lambda-\mu}}
  \abs{G_{\lambda-\mu}^{i,b^r}(h,d_2)}^4
}^{1/2}\nonumber
\\&&
\mathmakebox[0pt][r]{
  \times
  \p{
    \sum_{d_2,h<2^{\lambda-\mu}}
    \abs{
      \sum_{d_1\in I^{r,t}_u}
      \e\p{\frac{hmd_1}{2^{\lambda-\mu} } }
    }^2
  }^{1/2}
}
\nonumber
\\&\leq
2^{(\lambda-\mu)/2}
\max_{i\in \mathcal I}
\p{
  \sum_{d_2<2^{\lambda-\mu}}
  \max_{h<2^{\lambda-\mu}}
  \abs{G_{\lambda-\mu}^{i,b^r}(h,d_2)}^2
}^{1/2}
\label{eqn:showdown_1}
\\&&
\mathmakebox[0pt][r]{
  \times
  \sum_{u<U}
  \p{
    \sum_{h<2^{\lambda-\mu}}
    \abs{
      \sum_{k<2^{\lambda-\mu}}
      a_k^u
      \e\p{\frac{hk}{2^{\lambda-\mu} } }
    }^2
  }^{1/2}
}\nonumber
,
\end{align}
where
\[    a_k^u=a_k^{u,r,t,m}=\abs{\{d_1\in I^{r,t}_u:d_1m\equiv k\bmod 2^{\lambda-\mu}\}}.    \]
In order to estimate the sum over $d_2$ in~\eqref{eqn:showdown_1}, we use \propref{prp:uniform_fourier}:
there exist $\eta>0$ and $C$, which only depend on $L$,
such that for all $\lambda\geq\mu\geq 0$ and all $r\geq 1$ satisfying
$\nu_2(r)+\tau\leq (\lambda-\mu)/4$ we have
\begin{equation}\label{eqn:fourth_powers_estimate}
\max_{i\in \mathcal I}
\p{
  \sum_{d_2<2^{\lambda-\mu}}
  \max_{h<2^{\lambda-\mu}}
  \abs{G_{\lambda-\mu}^{i,b^r}(h,d_2)}^2
}^{1/2}
\leq
C
2^{(1-\eta)(\lambda-\mu)/2}.
\end{equation}
The sum over $h$ in~\eqref{eqn:showdown_1} can be estimated by \lemref{lem:large_sieve} (the large sieve inequality) and the estimate
\[
a_k^{u,r,t,m}
\leq
\begin{cases}
2^{\nu_2(m)}&\mu\leq \lambda-\mu\\
2^{\nu_2(m)}2^\mu/2^{\lambda-\mu}&\mu>\lambda-\mu
\end{cases}
\]
(which is clear for odd $m$ and follows easily by the decomposition $m=m_02^{\nu_2(m)}$ otherwise).
This gives
\begin{equation}\label{eqn:sieve_estimate}
\begin{aligned}
\p{
  \sum_{h<2^{\lambda-\mu}}
  \abs{
    \sum_{k<2^{\lambda-\mu}}
    a_k^{u,r,t,m}
    \e\p{\frac{hk}{2^{\lambda-\mu} } }
  }^2
}^{1/2}
&\ll
\p{
  2^{\lambda-\mu}
  \sum_{k<2^{\lambda-\mu}}
  \bigl\lvert a_k^{u,r,t,m}\bigr\rvert^2
}^{1/2}
\\&\ll
  2^{\lambda-\mu}
  2^{\nu_2(m)}
  \max\{1,2^{2\mu-\lambda}\}
.
\end{aligned}
\end{equation}
Combining~\eqref{eqn:fourth_powers_estimate}, \eqref{eqn:sieve_estimate} and~\eqref{eqn:U_estimate}, we get
\begin{align*}
\abs{S_6}
&\ll
r2^{2L+\tau}2^{\nu_2(m)}\max\{1,2^{2\mu-\lambda}\}2^{(2-\eta/2)(\lambda-\mu)}
\\&\ll
r2^{\nu_2(m)}\max\{1,2^{2\mu-\lambda}\}2^{(2-\eta/2)(\lambda-\mu)}
.
\end{align*}
with an implied constant depending only on $L$.
We translate this estimate back to an estimate for $S_1$.
By the estimate
\[    \sum_{1\leq m<M}2^{\nu_2(m)}\ll M\log M    \]
valid for $M\geq 1$, which is easy to show by splitting the summation according to the value of $\nu_2(m)$, we obtain
\begin{equation}\label{eqn:sum_S6_estimate}
\frac 1{RM}
\sum_{1\leq r<R}
\sum_{1\leq \abs{m}<M}\p{1-\frac{\abs{m}}M}S_6
\ll
R\log M\max\{1,2^{2\mu-\lambda}\}2^{(2-\eta/2)(\lambda-\mu)}
.
\end{equation}
We collect the error terms, using~\eqref{eqn:S1_estimate},
\eqref{eqn:S3_estimate_2} and~\eqref{eqn:sum_S6_estimate}
and use the discrepancy estimate~\eqref{eqn:mean_discrepancy_sum}, obtaining
\begin{multline}\label{eqn:collected_errors}
\abs{\frac{S_1(N,2^\nu,\xi)}{N2^\nu}}^4
\leq
C
\biggl(
    \frac 1{R^2}
  +
    \p{\frac{R2^\nu}{2^\lambda}}^2
  +
    \p{\frac RN}^2
  +
    \frac 1M
  +
    \frac{2^\mu M}N
  +
    \frac 1T
\\
  +
    T
    \frac{2^{\lambda-\mu}}N
    \frac{N+2^\lambda}{2^\nu}
    \bigl(\logp N\bigr)^2
  +
    R
    2^{(2-\eta/2)(\lambda-\mu)-\nu}
    \log M
    \max\{1,2^{2\mu-\lambda}\}
\biggr)
\end{multline}
with a constant $C$ depending only on $L$.
This estimate is valid for all integers $M,N,R,T\geq 1$ and $\lambda,\mu,\nu\geq 0$ such that $\mu\leq\nu<\nu+1\leq \lambda$ and $R2^\tau\leq 2^{(\lambda-\mu)/4}$, and for all real numbers $\xi$.
Moreover, this estimate also holds for real-valued parameters $M,R,T,\lambda,\mu$ satisfying these restrictions (with a possibly different constant $C$).
In order to finish the proof of \propref{prp:1}, we have to choose the parameters $M,R,T,\lambda$ and $\mu$, depending on $N$ and $D$.
Let $0<\rho_1\leq \rho_2<2$ be given and choose $\theta$ and $\varepsilon$ in such a way that
\[
\max\biggl\{
1,
\rho_2,
\frac{3\rho_2}{1+\rho_2},
2-\eta/2\biggr
\}<\theta<2
\]
and
\[
0<\varepsilon<\min\biggl\{
2-\theta,
\theta/\rho_2-1,
\theta-1,
\theta\frac{1+\rho_2}{\rho_2}-3,
\theta-(2-\eta/2),
1/4
\biggr\}.
\]
Assume that $D\geq 1$ is a real number and that $N,\nu\geq 1$ are integers such that
$N^{\rho_1}\leq D\leq N^{\rho_2}$ and $D<2^\nu\leq 2D$.
Set $\mu=\nu/\theta$, $\lambda=2\mu$ and $R=M=T=2^{\varepsilon\mu}$.
Using these choices it is not difficult to check, proceeding term by term, that
\[    \frac{S_1(N,2^\nu,\xi)}{N2^\nu}\leq C 2^{-\nu\eta_1}\bigl(\logp N\bigr)^2    \]
for some $C$ and $\eta_1>0$ depending only on $\rho_2$ and $L$.
Finally, we insert the lower bound $N^{\rho_1}\leq 2^\nu$ (so far we did not use $\rho_1$),
which completes the proof of \propref{prp:1}.
\section{Proof of \propref{prp:2}}\label{sec:proof_prp2}
We follow the proof of \propref{prp:1} and start, without loss of generality, with the same assumptions.
Assume that $L\geq 1$ is an integer, $a_0=1$ and $a_1,\ldots,a_{L-1}\in\{0,1\}$.
Choose $m\geq 5$ such that $2^{m-5}\leq L<2^{m-4}$ and set $\tau=2m$.
Assume that $D,N,\nu\geq 1$, where $N$ and $\nu$ are integers
satisfying $N^{\rho_1}\leq N\leq N^{\rho_2}$ and $D<2^\nu\leq 2D$.

We apply van der Corput's inequality for $K=2^\tau$ and obtain, in analogy to~\eqref{eqn:S1_estimate},
\begin{multline}\label{eqn:tilde_S1_estimate}
\abs{\tilde S_1}
^2
\ll
(2^\nu N)^2
O\p{
    \frac 1R
  +
    \frac{R2^\tau 2^\nu}{2^\lambda}
  +
    \frac {R2^\tau}N
}
\\
+
2^{3\nu/2}N
\p{
\frac 1R
\sum_{1\leq r<R}
\tilde S_3(N,r,\lambda,\nu)
}^{1/2}
,
\end{multline}
where
\begin{equation}\label{eqn:tilde_S3_definition}
\tilde S_3(N,r,\lambda,\nu)
=
\int_{2^\nu}^{2^{\nu+1}}
\max_{\beta\geq 0}
\abs{
  \sum_{n<N}
  \e\p{
    \frac 12
    \sum_{\ell\in\dZ}
    b^r_\ell
    s_\lambda\p{\floor{(n+\ell)\alpha+\beta}}
  }
}^2
\,\mathrm d \alpha
\end{equation}
and $b^r_\ell=a_{\ell-r2^\tau}-a_\ell$, and $a_\ell$ is assumed to be zero for 
$\ell\not\in[0,L)$. 
The estimate~\eqref{eqn:tilde_S1_estimate} is valid for $N,R\geq 1$ and $\lambda,\nu\geq 0$.

Next we want to ``cut off'' the $\mu$ lowest digits, that is, replace $s_\lambda$ by $s_{\mu,\lambda}$. This was accomplished by a simple application of \lemref{lem:vdc}, setting $K=2^\mu$, in the proof of \propref{prp:1}.
Since we are now considering Beatty sequences (having in general non-integer step length $\alpha$), we need to modify our strategy.
To do so, we use Diophantine approximation, more precisely, Farey series.
Let $\alpha\in\dR$ be given.
We assign a fraction $p(\alpha)/q(\alpha)$ to $\alpha$ according to the Farey dissection of the circle:
consider reduced fractions $a/b<c/d$ that are neighbours in the Farey series $\mathcal F_{2^{\mu+\sigma}}$,
where $\sigma\geq 1$ is chosen later, such that $a/b\leq \alpha/2^\mu<c/d$.
If $\alpha/2^\mu<(a+c)/(b+d)$, then set $p(\alpha)=a$ and $q(\alpha)=b$, otherwise set $p(\alpha)=c$ and $q(\alpha)=d$.
\lemref{lem:farey} implies
\begin{equation}\label{eqn:dirichlet}
\bigl\lvert q(\alpha)\alpha-p(\alpha)2^\mu\bigr\rvert < 2^{-\sigma}.
\end{equation}
Applying \lemref{lem:vdc} with $K=q(\alpha)$ and noting that $q(\alpha)\leq 2^{\mu+\sigma}$,
we obtain for all integers $M\geq 1$ and $\mu\geq 0$
\begin{multline}
\label{eqn:tilde_S3_summand_estimate}
\abs{
  \sum_{n<N}
  \e\p{
    \frac 12
    \sum_{\ell\in\dZ}
    b^r_\ell
    s_\lambda\p{\floor{(n+\ell)\alpha+\beta}}
  }
}^2
\\
\ll
O(2^{\mu+\sigma}MN)
+
\frac NM
\sum_{\abs{m}<M}
\p{1-\frac{\abs{m}}M}
\\\times
\sum_{n<N}
\e\p{
  \frac 12
  \sum_{\ell\in\dZ}
  b^r_\ell
  \bigl(
      s_{\lambda}\p{\floor{(n+\ell+m\,q(\alpha))\alpha+\beta} }
    -
      s_{\lambda}\p{\floor{(n+\ell)\alpha+\beta} }
  \bigr)
}.
\end{multline}
In order to reduce this expression to a sum analogous to $S_4$, we want to shift the expression $m\,q(\alpha)\alpha$ out of the floor function.
To this end, we use~\eqref{eqn:dirichlet} and the argument that $m\,q(\alpha)\alpha$ is close to an integer, while $(n+\ell)\alpha+\beta$ usually is not.
This can be made precise as follows.
Assume that
\begin{equation}\label{eqn:spaced}
\norm{(n+\ell)\alpha+\beta}\geq M/2^\sigma
\end{equation}
and that $2M<2^\sigma$.
Using part~\eqref{item:fpf_3} of \lemref{lem:fractional_part_facts} with $\varepsilon=1/2^\sigma$
and~\eqref{eqn:dirichlet}, moreover noting that $\sigma\geq 1$, we obtain
\[    \ang{m\,q(\alpha)\alpha}=m\ang{q(\alpha)\alpha}=m\,p(\alpha)2^\mu.    \]
Applying part~\eqref{item:fpf_1} of \lemref{lem:fractional_part_facts}, setting $\varepsilon=M/2^\sigma$, we see that~\eqref{eqn:spaced} implies
\begin{equation}\label{eqn:floor_split}
\floor{(n+\ell+m\,q(\alpha))\alpha+\beta}
=
m\,p(\alpha)2^\mu+\floor{(n+\ell)\alpha+\beta}.
\end{equation}
The number of $n$ where hypothesis~\eqref{eqn:spaced} fails for some $\ell$ can be estimated by discrepancy estimates for $\{n\alpha\}$-sequences:
for all positive integers $N$ and $2M<2^\sigma$ we have
\begin{equation}\label{eqn:count_close_integers}
\begin{aligned}
\hspace{4em}&\hspace{-4em}\bigl\lvert\bigl\{n<N:\norm{(n+\ell)\alpha+\beta}\leq M/2^\sigma\bigr\}\bigr\rvert\\
&=\bigl\lvert\bigl\{n<N:(n+\ell)\alpha+\beta\in \left[-M/2^\sigma,M/2^\sigma\right]+\dZ\bigr\}\bigr\rvert\\
&=\bigl\lvert\bigl\{n<N:n\alpha\in \left[0,2M/2^\sigma\right]-\beta-\ell\alpha-M/2^\sigma+\dZ\bigr\}\bigr\rvert\\
&\leq ND_N(\alpha)+2MN/2^\sigma.
\end{aligned}
\end{equation}
Multiplying this error by $2L$ (which is $O(1)$ according to our conventions stated in the introduction),
we obtain an upper bound for the number of $n<N$ such that
$\norm{(n+\ell)\alpha+\beta}\leq M/2^\sigma$ for some 
$\ell\in[0,L)\cup[r2^\tau,r2^\tau+L)$.                
Treating these integers separately and using~\eqref{eqn:tilde_S3_summand_estimate} through~\eqref{eqn:count_close_integers}, we obtain
\begin{multline}\label{eqn:tilde_S3_summand_estimate_2}
\abs{
  \sum_{n<N}
  \e\p{
    \frac 12
    \sum_{\ell\in\dZ}
    b^r_\ell
     s_\lambda\p{\floor{(n+\ell)\alpha+\beta}}
  }
}^2
\\
\ll
N\bigl\lvert\tilde S_4\bigr\rvert
+
O\bigl(2^{\mu+\sigma}NM
+
N^2 D_N(\alpha)
+
N^2M/2^\sigma
\bigr)
,
\end{multline}
where
\begin{align*}
\tilde S_4 &= \tilde S_4(N,M,\alpha,\beta,r,\mu,\lambda)
=
\frac 1M
\sum_{\abs{m}<M}
\p{1-\frac {\abs{m}}M}
\\&\times
\sum_{n<N}
\e\Biggl(
 \frac 12
 \sum_{\ell\in\dZ}
 b^r_\ell
 \biggl(
  s_{\lambda-\mu}\p{\floor{\frac{(n+\ell)\alpha+\beta}{2^\mu} }+m\,p(\alpha)}
\\&&
\mathmakebox[0pt][r]{
   -s_{\lambda-\mu}\p{\floor{\frac{(n+\ell)\alpha+\beta}{2^\mu} } }
  \biggr)
 \Biggr)
.
}
\end{align*}
Note that~\eqref{eqn:tilde_S3_summand_estimate_2} is, except for the error terms,
completely analogous to equation~\eqref{eqn:S3_summand_estimate} in the proof of \propref{prp:1}.
From~\eqref{eqn:tilde_S3_definition} and~\eqref{eqn:tilde_S3_summand_estimate_2} we get
\begin{equation}\label{eqn:tilde_S3_estimate}
\begin{aligned}
\tilde S_3(N,r,\lambda,\nu)
&\ll
N
\int_{2^\nu}^{2^{\nu+1}}
\max_{\beta\geq 0}
\bigl\lvert\tilde S_4\bigr\rvert
\,\mathrm d\alpha
+
N^2\int_{2^\nu}^{2^{\nu+1}}D_N(\alpha)\,\mathrm d\alpha
\\&+
2^{\mu+\sigma+\nu}MN
+
N^22^\nu M/2^\sigma
.
\end{aligned}
\end{equation}
Let $t,T$ be integers such that $0\leq t<T$ and define
\begin{multline*}
\tilde S_5 
=
\sum_{
  \substack{
     n<N
  \\
      \frac tT
    \leq
      \left\{
        \frac{n\alpha+\beta}{2^\mu}
      \right\}
    <
      \frac{t+1}T
  }
}
\e\Bigg(
  \frac 12\sum_{\ell\in\dZ}
  b^r_\ell
  \biggl(
      s_{\lambda-\mu}\p{\floor{\frac{(n+\ell)\alpha+\beta}{2^\mu} }+m\,p(\alpha)}
\\
    -
      s_{\lambda-\mu}\p{\floor{\frac{(n+\ell)\alpha+\beta}{2^\mu} } }
  \biggr)
\Bigg)
\end{multline*}
and
\begin{multline*}
G=G(T,\alpha,r,\mu)
=
\biggl\{t<T:
  \left[
    \frac tT+\frac {\ell \alpha}{2^\mu},
    \frac{t+1}T+\frac{\ell \alpha}{2^\mu}
  \right)
\cap
  \dZ
=
  \emptyset
\\
  \text{ for all }
  \ell\in [0,L)\cup[r2^\tau,r2^\tau+L)
\biggr\}.
\end{multline*}
Again, we have
\begin{equation}\label{eqn:exceptional_intervals2}
G
\geq
T-2L
\end{equation}
and we distinguish between the cases $t\in G$ and $t\not\in G$.
For $t\not\in G$ we estimate $\tilde S_5$ trivially, applying \lemref{lem:f_discrepancy} with $K=2^{\lambda-\mu}$.
We obtain
\begin{equation}\label{eqn:tilde_S5_trivial_estimate}
\tilde S_5
\ll
\frac NT
+
2^{\lambda-\mu}ND_N\p{\frac {\alpha}{2^\lambda} }
.
\end{equation}
Let $t\in G$ 
and assume that $t/T\leq \{(n\alpha+\beta)2^{-\mu}\}<(t+1)/T$.
Then, as before,
\[
\floor{\frac{(n+\ell)\alpha+\beta}{2^\mu}}
=
\floor{\frac{n\alpha+\beta}{2^\mu}}+\floor{\frac tT+\frac{\ell \alpha}{2^\mu}}
\]
for $\ell\in [0,L)\cup[r2^\tau,r2^\tau+L)$. 
For $t\in G$ we obtain
\begin{multline*}
\tilde S_5
=
\sum_{k<2^{\lambda-\mu}}
\sum_{
  \substack{
    n<N
  \\
      \frac tT
    \leq
      \left\{
        \frac{n\alpha+\beta}{2^\mu}
      \right\}
    <
      \frac{t+1}T
  \\
    \floor{\frac{n\alpha+\beta}{2^\mu}}\equiv k\bmod 2^{\lambda-\mu}
  }
}
\e\Biggl(
  \frac 12\sum_{\ell\in\dZ}
  b^r_\ell
  \biggl(
      s_{\lambda-\mu}
      \p{
        k
        +
        \floor{\frac tT+\frac{\ell \alpha}{2^\mu}}
        +
        m\,p(\alpha)
      }
\\
    -
      s_{\lambda-\mu}\p{
        k
        +
        \floor{\frac tT+\frac{\ell \alpha}{2^\mu}}
      }
  \biggr)
\Biggr)
.
\end{multline*}
We apply \lemref{lem:f_discrepancy}, setting $K=2^{\lambda-\mu}$, and obtain for $t\in G$
\begin{multline}\label{eqn:tilde_S5_relevant_estimate}
\tilde S_5
=
\frac{N}{2^{\lambda-\mu}T}
\sum_{k<2^{\lambda-\mu}}
\e\Biggl(
  \frac 12\sum_{\ell\in\dZ}
  b^r_\ell
  \biggl(
      s_{\lambda-\mu}
      \p{
        k+\floor{\frac tT+\frac{\ell \alpha}{2^\mu}}+m\,p(\alpha)
      }
\\
    -
      s_{\lambda-\mu}\p{
        k+\floor{\frac tT+\frac{\ell \alpha}{2^\mu}}
      }
  \biggl)
\Biggr)
+O\p{2^{\lambda-\mu}ND_N\p{\frac \alpha{2^\lambda} } }.
\end{multline}
Setting
\[    i^{\alpha,t}_\ell=\floor{\frac tT+\ell\left\{\frac\alpha{2^\mu}\right\}},    \]
we have
\[    \floor{\frac tT+\frac{\ell\alpha}{2^\mu}}=\ell\floor{\frac \alpha{2^\mu}}+i^{\alpha,t}_\ell.    \]
In \lemref{lem:correlation_fourier} we set $t=m\,p(\alpha)$ and
\[f(n)=\e\p{\frac 12\sum_{\ell\in\dZ}b^r_\ell
s_{\lambda-\mu}\p{n+\ell\floor{\frac{\alpha}{2^\mu}}+i^{\alpha,t}_\ell} }\]
and obtain for $t\in G$
\begin{equation}\label{eqn:tilde_S5_representation}
\tilde S_5
=
  \frac NT
  \sum_{h<2^{\lambda-\mu}}
  \abs{G_{\lambda-\mu}^{i^{\alpha,t},b^r}\p{h,\floor{\frac \alpha{2^\mu} } } }^2
  \e\p{\frac{hm\,p(\alpha)}{2^{\lambda-\mu} } }
+
  O\p{2^{\lambda-\mu}ND_N\p{\frac \alpha{2^\lambda} } }
.
\end{equation}
Using the identity
\[
\tilde
S_{4}
=
\frac 1M
\sum_{\abs{m}<M}
\p{1-\frac{\abs{m}}M}
\sum_{t<T}
\tilde
S_5
\]
as well as~\eqref{eqn:exceptional_intervals2},~\eqref{eqn:tilde_S5_trivial_estimate} and~\eqref{eqn:tilde_S5_representation}, we obtain, in analogy to~\eqref{eqn:S4_estimate},
\begin{align}
\tilde S_{4}
&=
\frac {N}{T}
\sum_{t<T}
\frac 1M
\sum_{\abs{m}<M}
\p{1-\frac{\abs{m}}M}\nonumber
\\&\qquad\times
\sum_{h<2^{\lambda-\mu}}
\abs{G_{\lambda-\mu}^{i^{\alpha,t},b^r}
\left(h,\floor{\frac \alpha{2^\mu}}\right) }^2
\e\p{\frac{hm\,p(\alpha)}{2^{\lambda-\mu} } }
\label{eqn:tilde_S4_estimate}
\\&\qquad+
O\p{
    \frac NT
  +
    2^{\lambda-\mu}
    TND_N\p{\frac{\alpha}{2^\lambda} }
}\nonumber
.
\end{align}
From~\eqref{eqn:tilde_S3_estimate} and~\eqref{eqn:tilde_S4_estimate},
treating the summand for $m=0$ separately, we get
\begin{multline}\label{eqn:tilde_S3_estimate_2}
\tilde S_3
\ll
\frac {N^2}{T}
\sum_{t<T}
\frac 1M
\sum_{1\leq \abs{m}<M}
\p{1-\frac{\abs{m}}M}
\tilde S_6
+
2^\nu N^2
\,O\Biggl(
    \frac 1M
  +
    \frac{2^{\mu+\sigma}M}N
\\
  +
    \frac 1T
  +
    \frac {2^{\lambda-\mu}T}{2^\nu}
    \int_{2^\nu}^{2^{\nu+1} }
    D_N\p{\frac {\alpha}{2^\lambda}}
    \,\mathrm d\alpha
  +
    \frac 1{2^\nu}
    \int_{2^\nu}^{2^{\nu+1} }
    D_N(\alpha)
    \,\mathrm d\alpha
  +
    \frac{M}{2^\sigma}
\Biggr)
,
\end{multline}
where
\begin{multline*}
\tilde S_6
=
\tilde S_6(\alpha,r,m,t,\lambda,\mu,\nu)
\\=
\int_{2^\nu}^{2^{\nu+1} }
\sum_{h<2^{\lambda-\mu} }
\abs{G_{\lambda-\mu}^{i^{\alpha,t},b^r}
\left(h,\floor{\frac \alpha{2^\mu}}\right) }^2
\e\p{\frac{hm\,p(\alpha)}{2^{\lambda-\mu} } }
\,\mathrm d\alpha
.
\end{multline*}
As in the proof of \propref{prp:1} we obtain intervals $I^{r,t}_0,\ldots,I^{r,t}_{U-1}\subseteq\dR$, where $U\leq r2^{2L+\tau}$, such that $\alpha\mapsto i^{\alpha,t}\mid_M$ is constant on each $I^{r,t}_u$.
Define $\mathcal I$ as before, that is, as the set of sequences $(i_\ell)_{\ell<L+r2^\tau-1}$ such that $0\leq i_{\ell+1}-i_\ell\leq 1$ for $0\leq i<L+r2^\tau-1$.
Moreover, we assume from now on that $m\neq 0$ and $\lambda\geq \nu\geq \mu$.
We obtain, applying the Cauchy--Schwarz inequality to the sum over $(h,d_2)$,
\begin{align*}
\bigl\lvert\tilde S_6\bigr\rvert
&=
\abs{
\sum_{u<U}
\int_{I^{r,t}_u}
\sum_{d_2<2^{\nu-\mu}}
\sum_{h<2^{\lambda-\mu}}
\abs{G_{\lambda-\mu}^{i^{\alpha,t},b^r}(h,d_2) }^2
\e\p{\frac{hm\,p(\alpha+d_2 2^\mu)}{2^{\lambda-\mu} } }\,\mathrm d\alpha
}
\\
&\leq
\sum_{u<U}
\max_{i\in \mathcal I}
\sum_{d_2<2^{\lambda-\mu}}
\sum_{h<2^{\lambda-\mu}}
\abs{G_{\lambda-\mu}^{i,b^r}(h,d_2) }^2
\abs{
  \int_{I^{r,t}_u}
  \e\p{\frac{hm\,p(\alpha+d_2 2^\mu)}{2^{\lambda-\mu} } }
  \,\mathrm d\alpha
}
\\&\leq
\sum_{u<U}
\max_{i\in \mathcal I}
\p{
  \sum_{d_2,h<2^{\lambda-\mu}}
  \abs{G^{i,b^r}_{\lambda-\mu}(h,d_2)}^4
}^{1/2}
\\&&
\mathmakebox[0pt][r]{
  \times
  \p{
    \sum_{d_2,h<2^{\lambda-\mu}}
    \abs{
      \int_{I^{r,t}_u}
      \e\p{\frac{hm\,p(\alpha+d_2\,2^\mu)}{2^{\lambda-\mu} } }
      \,\mathrm d\alpha
    }^2
  }^{1/2}
}
\\&\leq
\max_{i\in \mathcal I}
\p{
  \sum_{d_2<2^{\lambda-\mu}}
  \max_{h<2^{\lambda-\mu}}
  \abs{G^{i,b^r}_{\lambda-\mu}(h,d_2)}^2
}^{1/2}
\\&&
\mathmakebox[0pt][r]{
  \times
  \sum_{u<U}
  \p{
    \sum_{d,h<2^{\lambda-\mu}}
    \abs{
      \sum_{k<2^{\lambda-\mu}}
      a_k^{u,d}
      \e\p{\frac{hk}{2^{\lambda-\mu}} }
    }^2
  }^{1/2}
}
,
\end{align*}
where
\[
a_k^{u,d}
=a_k^{r,m,t,u,d,\mu,\lambda}
=\boldsymbol\lambda\p{
  \{
    \alpha\in I_u^{r,t}:m\,p(\alpha+d\,2^\mu)\equiv k\bmod 2^{\lambda-\mu}
  \}
},
\]
and~$\boldsymbol\lambda$ denotes the Lebesgue measure.
Assume that $r2^\tau\leq 2^{(\lambda-\mu)/4}$.
Then, using \propref{prp:uniform_fourier},
the first factor can be estimated by $C2^{(1-\eta)(\lambda-\mu)/2}$.
The second factor can be estimated by the large sieve inequality
(\lemref{lem:large_sieve}) and $\lvert U\rvert\ll r$, which yields
\[
\bigl\lvert\tilde S_6\bigr\rvert
\ll
r\,2^{(1-\eta)(\lambda-\mu)/2}
\max_{u<U}
\p{
\sum_{d<2^{\lambda-\mu}}
2^{\lambda-\mu}
\sum_{k<2^{\lambda-\mu}}
\abs{
a_k^{u,d}
}^2
}^{1/2}
.
\]
It remains to find estimates for $a_k^{u,d}$.
In order to do so, we first note that
\begin{equation}\label{eqn:fraction_shift}
\begin{aligned}
q(\alpha+d\,2^\mu)&=q(\alpha)\\
p(\alpha+d\,2^\mu)&=p(\alpha)+q(\alpha)d
\end{aligned}
\end{equation}
for all $d\in\dN$.
\lemref{lem:farey} implies
\begin{equation}\label{eqn:measure_estimate}
\boldsymbol\lambda\p{\{\alpha\in[0,2^\mu):p(\alpha)=p, q(\alpha)=q\}}
\leq
2\frac{2^\mu}{2^{\mu+\sigma}q}
.
\end{equation}
Formulas~\eqref{eqn:measure_estimate} and~\eqref{eqn:fraction_shift} together with the estimate
\[
\abs{\{p<2^{\mu+\sigma}:mp\equiv k\bmod 2^{\lambda-\mu}\}}
\leq
2^{\nu_2(m)}\max\{1,2^{2\mu-\lambda+\sigma}\}
\]
imply
\begin{align*}
\hspace{4em}&\hspace{-4em}\boldsymbol\lambda\p{\{\alpha\in[0,2^\mu):m\,p(\alpha+d\,2^\mu)\equiv k\bmod 2^{\lambda-\mu}, q(\alpha+d\,2^\mu)=q\}}
\\&=
\boldsymbol\lambda\p{\{\alpha\in[0,2^\mu):m\,p(\alpha)\equiv k-d\,q\bmod 2^{\lambda-\mu}, q(\alpha)=q\}}
\\&\leq
2^{\nu_2(m)}
\max\{1,2^{2\mu-\lambda+\sigma}\}
\frac{2}{2^\sigma q}
\end{align*}
By a summation over $q\leq 2^{\mu+\sigma}$ we obtain
\[
a_k^{u,d}
\ll
2^{\nu_2(m)}
\max\{1,2^{2\mu-\lambda+\sigma}\}
(\mu+\sigma)/2^{\sigma}
.
\]
Therefore
\[\bigl\lvert\tilde S_6\bigr\rvert
\ll
R2^{\nu_2(m)}(\mu+\sigma)
\,2^{(2-\eta')(\lambda-\mu)}\max\{2^{-\sigma},2^{2\mu-\lambda}\}
,\]
which leads to
\begin{equation}\label{eqn:tilde_sum_S6_estimate}
\begin{aligned}
\hspace{4em}&\hspace{-4em}
\frac 1{RM}
\sum_{1\leq r<R}
\sum_{1\leq\abs{m}<M}
\p{1-\frac{\abs{m}}M}
\tilde S_6
\\&\ll
R\log M (\mu+\sigma)
\,2^{(2-\eta')(\lambda-\mu)}
\max\{2^{-\sigma},2^{2\mu-\lambda}\}.
\end{aligned}
\end{equation}
By analogous reasoning as in the proof of \propref{prp:2}, leading to~\eqref{eqn:collected_errors}, and using
\eqref{eqn:tilde_S1_estimate}, \eqref{eqn:tilde_S3_estimate_2}
and~\eqref{eqn:tilde_sum_S6_estimate} and the estimate for the integral mean of
the discrepancy (\lemref{lem:mean_discrepancy}), we obtain
\begin{multline*}
\abs{\frac{\tilde S_1(N,\nu,\xi)}{N2^\nu}}^4
\ll
\frac 1{R^2}
+
\p{\frac{R2^{\nu+\tau}}{2^\lambda } }^2
+
\p{\frac {R2^\tau}N}^2
+
\frac 1M
+
\frac{2^{\mu+\sigma}M}N
+
\frac 1T
+
\frac {M}{2^\sigma}
\\
+
T
\frac{2^{\lambda-\mu}}N
\bigl(\logp N\bigr)^2
+
R\log M(\mu+\sigma)2^{(2-\eta')(\lambda-\mu)+2\mu-\lambda-\nu}
\end{multline*}
for all integers $T,R,M,N,\sigma\geq 1$ and $\lambda,\mu,\nu\geq 0$
such that $\mu\leq \nu\leq \lambda\leq 2\nu$, $2M<2^\sigma$,
$R2^\tau\leq 2^{(\lambda-\mu)/4}$ and for all $\xi\in\dR$.
An analogous argument as in the proof of \propref{prp:1} finishes the proof.
\section{Proof of \propref{prp:uniform_fourier}}
\subsection{A recurrence for Fourier coefficients}
In order to get started with the proof of \propref{prp:uniform_fourier}, we recall the definition of the discrete Fourier coefficients $G_\lambda^{i,b}(h,d)$, which is equation~\eqref{eqn:G_definition}.
For nonnegative integers $d$ and $\lambda$, for sequences
$i:\dN\ra\dN$ (for notational reasons we use, in this section, the function notation $i(\ell)$ to denote the $\ell$-th element) and $b:\dN\ra\dZ$, where $b$ has finite support,
and for $h\in\dZ$ we have
\begin{equation*}
G_\lambda^{i,b}(h,d)
=
\frac 1{2^\lambda}
\sum_{u<2^\lambda}
\e\p{\frac 12
\sum_{\ell\geq 0}
b_\ell s_\lambda(u+\ell d+i(\ell))-\frac{hu}{2^\lambda}
}.
\end{equation*}
For any $K\geq 0$, we denote by $\mathcal I_K$ the set of sequences $i:\dN\ra\dN$ with support in 
$[0,K)$ 
such that $i(\ell+1)-i(\ell)\in \{0,1\}$ for $0\leq \ell<K-1$.
For $(\delta,\varepsilon)\in \{0,1\}^2$, we define a transformation $T_{\delta,\varepsilon}:\mathcal I_K\ra\mathcal I_K$ by
\begin{equation*}
  T_{\delta,\varepsilon}(i)(\ell) = \floor{\frac{i(\ell) + \ell \delta + \varepsilon}{2}}
\end{equation*}
for $0\leq \ell<K$.
Note that this transformation is well-defined.
We also define \emph{weights} by
\begin{equation*}
  f^{i,b}_{\delta,\varepsilon} = \e\p{\frac{1}{2}\sum_{\ell<K} b_{\ell}(i(\ell)+\ell\delta+\varepsilon)}.
\end{equation*}
These quantities appear in the following recurrence for the discrete Fourier coefficients,
compare~\cite[Lemma 13]{DMR2015}.
\begin{lem}
Assume that $b:\dN\ra\dZ$ and $i\in \mathcal I_K$.
Then for all integers $d,\lambda\geq 0$ and $h$,
and $\varepsilon \in \{0,1\}$ we have
\begin{equation}\label{eqn:rec_G}
  G_{\lambda}^{i,b}(h,2 d+\delta)
  = \frac{1}{2} \sum_{\varepsilon=0}^{1}  \e\p{-\frac{h\varepsilon}{2^{\lambda} } } f_{\delta,\varepsilon}^{i,b}\, G_{\lambda-1}^{T_{\delta,\varepsilon}(i),b}(h,d).
\end{equation}
\end{lem}
\begin{proof}
By splitting the sum in the definition of $G^{i,b}_\lambda(h,d)$ according to the parity of $u$, we obtain (writing $\varepsilon_0(n)$ for the lowest binary digit of $n$)
\begin{align*}
\hspace{1em}&\hspace{-1em}G_{\lambda}^{i, b}(h,2d + \delta)
\\&=
  \frac{1}{2^{\lambda}} \sum_{\varepsilon=0}^{1}
  \sum_{u < 2^{\lambda-1}} \e\p{\frac{1}{2}\sum_{\ell<K} b_\ell
	s_{\lambda}(2u + \varepsilon + \ell(2d + \delta) + i(\ell))
  - h(2u+\varepsilon)2^{-\lambda}}
\\&=
  \frac{1}{2^{\lambda}} \sum_{\varepsilon=0}^{1} \e\p{
  -\frac{h\varepsilon}{2^{\lambda} } }
  \sum_{u < 2^{\lambda-1}}
  \e\Bigg(\frac{1}{2}\sum_{\ell<K} b_\ell
	\bigg(
  s_{\lambda-1}\p{u + \ell d + \floor{\frac{i(\ell) + \ell \delta + \varepsilon}{2} } }
\\&&
\mathmakebox[0pt][r]{
  + \varepsilon_0(i(\ell) + \ell \delta + \varepsilon)\biggr)
  - hu2^{\lambda-1}\Biggr)
}
\\&=
  \frac 12 \sum_{\varepsilon=0}^{1}\e\p{-\frac{h\varepsilon}{2^{\lambda} } }f_{\delta,\varepsilon}^{i,b}\, G_{\lambda-1}^{T_{\delta,\varepsilon}(i),b}(h,d).
\end{align*}
\end{proof}
We want to study compositions of elementary transformations $T_{\delta,\varepsilon}$.
We therefore extend this notation as follows.
Assume that $d,e,m\geq 0$ are integers.
For $d,e<2^m$, we define $T_{d,e}^{(m)}:\mathcal I_K\ra\mathcal I_K$ by setting, for $i\in\mathcal I_K$ and $\ell<K$,
\begin{equation}\label{eqn:general_transformation_def}
T_{d,e}^{(m)}(i)(\ell)
=\floor{\frac{i(\ell)+\ell d+e}{2^m}}.
\end{equation}
For general integers $d,e\geq 0$ we set
\begin{equation*}
T_{d,e}^{(m)}
=T_{d\bmod 2^m,\,e\bmod 2^m}^{(m)}.
\end{equation*}
Note that we have
\begin{equation}\label{eqn:trafo_shift}
T_{d,0}^{(m)}(i)(\ell)\leq
T_{d,e}^{(m)}(i)(\ell)\leq
T_{d,0}^{(m)}(i)(\ell)+1
\end{equation}
for all $e\geq 0$.
By a straightforward induction it follows that
\begin{equation}\label{eqn:general_transformation_dec}
T_{d,e}^{(m)} = T_{\delta_{m-1},\varepsilon_{m-1}}\circ\cdots\circ T_{\delta_0,\varepsilon_0},
\end{equation}
for $m\geq 1$, where $\sum_{i\geq 0}\delta_i2^i$ and $\sum_{i\geq 0}\varepsilon_i2^i$
are the binary expansions of $d$ and $e$ respectively.
Moreover, $T^{(0)}_{d,e}$ is the identity on $\mathcal I_K$.
We also define, generalizing the notation
$f_{\delta,\varepsilon}^{i,b}$,
\begin{equation}\label{eqn:complex_weight_def}
f_{d,e}^{(m),i,b} = f_{\delta_{m-1},\varepsilon_{m-1}}^{T_{d,e}^{(m-1)}(i),b}\cdots f_{\delta_1,\varepsilon_1}^{T_{d,e}^{(1)}(i),b}\cdot f_{\delta_0,\varepsilon_0}^{i,b}.
\end{equation}
In order to obtain a recurrence relation for $\lvert G_{\lambda}^{i,b}(h,d)\rvert^2$, we define
\begin{equation*}
  \Phi_{\lambda}^{i_1,i_2,b}(h,d) = G_{\lambda}^{i_1,b}(h,d) \overline{G_{\lambda}^{i_2,b}(h,d)}.
\end{equation*}
Using \eqref{eqn:rec_G}, this immediately yields
\begin{multline}\label{eqn:rec_psi}
  \Phi_{\lambda}^{i_1,i_2,b}(h,2d+\delta)
\\
= \frac{1}{4} \sum_{\varepsilon_1<2}
  \sum_{\varepsilon_2<2} \e\p{-\frac{(\varepsilon_1-\varepsilon_2)h}{2^{\lambda} } }
  f_{\delta,\varepsilon_1}^{i_1,b} \overline{f_{\delta,\varepsilon_2}^{i_2,b}}
  \Phi_{\lambda-1}^{T_{\delta,\varepsilon_1}(i_1), T_{\delta,\varepsilon_2}(i_2),b}(h,d)
\end{multline}
for $\delta\in\{0,1\}$, and applying this identity iteratively one gets, for $m\in \dN$ and $d'<2^m$,
\begin{equation}\label{eqn:rec_psi_iterated}
\begin{aligned}
    \Phi_{\lambda}^{i_1,i_2,b}(h,2^md+d')
&=  \frac{1}{4^m} \sum_{e_1<2^m} \sum_{e_2<2^m}
    \e\p{-\frac{(e_1-e_2)h}{2^{\lambda} } }
\\& \quad\times f^{(m),i_1,b}_{d',e_1}\overline{f^{(m),i_2,b}_{d',e_2}}
    \Phi_{\lambda-m}^{T_{d',e_1}^{(m)}(i_1),T_{d',e_2}^{(m)}(i_2),b}(h,d).
\end{aligned}
\end{equation}
Obviously this implies for all $d'<2^m$
\begin{equation}\label{eqn:trivial_estimate}
\begin{aligned}
  \abs{G_{\lambda}^{i,b}(h,2^md+d')}^2
&=\abs{\Phi_{\lambda}^{i,i,b}(h,2^md+d')}
\\&\leq
\max_{e_1,e_2<2^m} \abs{\Phi_{\lambda-m}^{T_{d',e_1}^{(m)}(i),T_{d',e_2}^{(m)}(i),b}(h,d)}
\\&=
\max_{e<2^m} \abs{G_{\lambda-m}^{T_{d',e}^{(m)}(i),b}(h,d)}^2,
\end{aligned}
\end{equation}
an estimate that is also valid for $m=0$.
\subsection{An estimate for Fourier coefficients}
In this section, we are concerned with such sequences $b$
originating from a sequence $a=(a_0,\ldots,a_{L-1})\in\{0,1\}^L$, where $1\leq L\leq r$, via the assignment
\begin{equation}\label{eqn:b_structure}
b_\ell=\begin{cases}
a_\ell,&0\leq \ell<L,\\
-a_{\ell-r},&r\leq \ell<L+r-1,\\
0,&\text{otherwise.}
\end{cases}
\end{equation}
That is, the sequence $b$ consists of two blocks, identical modulo $2$.
From now on, we assume that $b$ is such a sequence and that $K=L+r$ (such that $b_\ell$, for $\ell<K$,
captures all nonzero values).
For brevity, and since $b$ is constant in what follows, we omit $b$ as an upper index of $G,\Phi$ and $f$.
Moreover, we assume throughout this section that $\lambda\geq 0,r\geq 1$ and $m\geq 5$ are integers such that
\begin{align}
&2^{m-5}\leq L<2^{m-4},\label{eqn:m_size}\\
&2m\leq\nu_2(r)\leq \lambda/4\label{eqn:r_divisibility}.
\end{align}
For brevity, we write $x=\nu_2(r)$.
\begin{lem}\label{lem:saving}
Assume that the sequence $i\in \mathcal{I}_K$ satisfies
\begin{equation}\label{eqn:prepared}
i(r)\bmod 2^m\in\{1,2\}.
\end{equation}
Let $z\geq 0$ and $h$ be integers and $0\leq d<2^z$. Then
\[
  \abs{G_{z+m}^i(h,d\,2^{m}+1)}^2
  \leq
  (1-\eta) \max_{e<2^m}
  \abs{G_z^{T^{(m)}_{1,e}(i)}(h,d)}^2
\]
for $\eta = \frac{2}{4^m}$.
\end{lem}
\begin{proof}
We rewrite the left hand side via the identity~\eqref{eqn:rec_psi_iterated}, setting $i_1=i_2=i$, and want to find pairs of indices $(e'_1,e'_2), (e''_1, e''_2)$ such that the corresponding two summands on the right hand side of~\eqref{eqn:rec_psi_iterated} cancel.
This will give the announced saving.
That is, we want
\begin{align}
T^{(m)}_{1,e'_1}(i) &= T^{(m)}_{1,e''_1}(i),
\label{eqn:saving_1}\\
T^{(m)}_{1,e'_2}(i) &= T^{(m)}_{1,e''_2}(i),
\label{eqn:saving_2}\\
f^{(m),i}_{1,e'_1}\overline {f^{(m),i}_{1,e'_2}} &= -f^{(m),i}_{1,e''_1}\overline{f^{(m),i}_{1, e''_2}}
\label{eqn:saving_3}\quad\text{and}\\
e'_1-e'_2&=e''_1-e''_2.\label{eqn:saving_4}
\end{align}
We show that these conditions are satisfied for the choice
\begin{align*}
    e'_1  = (0101^{m-3})_2, &\quad e'_2  = (1001^{m-3})_2,\\
    e''_1 = (0111^{m-3})_2, &\quad e''_2 = (1011^{m-3})_2,
\end{align*}
where $(\varepsilon_\nu\ldots\varepsilon_0)_2=\sum_{i\leq \nu}\varepsilon_i2^i$ and $1^k$ means $k$-fold repetition of the digit $1$.
Condition~\eqref{eqn:saving_4} is clearly true.
In order to verify~\eqref{eqn:saving_1} and~\eqref{eqn:saving_2}, we note that the binary representations of $e_1',e_2',e_1'',e_2''$ all start with $m-3$ ones.
We therefore define
\[    j=T_{1,(1^{m-3})_2}^{(m-3)}(i).    \]
By~\eqref{eqn:m_size}, which implies $i(\ell)+\ell<2^{m-3}$ for $0\leq\ell<L$, we have
\begin{equation}\label{eqn:j_first_part}
j(\ell)=\floor{\frac{i(\ell)+\ell+2^{m-3}-1}{2^{m-3} } }
=
\begin{cases}
0,&\ell=0,\\
1,&1\leq \ell<L.
\end{cases}
\end{equation}
Moreover,
by~\eqref{eqn:prepared}
we obtain $i(r+\ell)+\ell-1\bmod 2^m\in\{0,\ldots,2L-1\}$.
Using also~\eqref{eqn:m_size} and~\eqref{eqn:r_divisibility}, we get
\begin{equation}\label{eqn:j_second_part}
j(r+\ell)
=\floor{\frac{i(r+\ell)+r+\ell+2^{m-3}-1}{2^{m-3} } }
\equiv 1\bmod 8
\end{equation}
for $0\leq \ell<L$.
Since $j\in \mathcal I_K$, this equation implies that the value $j(\ell)$ is constant for 
$\ell\in[r,r+L)$.                                                                         
By~\eqref{eqn:general_transformation_dec} we obtain~\eqref{eqn:saving_1} and~\eqref{eqn:saving_2} as soon as we show that
$T_{0,2}^{(3)}(j)=T_{0,3}^{(3)}(j)=T_{0,4}^{(3)}(j)=T_{0,5}^{(3)}(j)$.
Using~\eqref{eqn:j_first_part} and~\eqref{eqn:j_second_part} we have for $0\leq e\leq 6$ and $0\leq \ell<L$
\begin{align*}
T_{0,e}^{(3)}(j)(\ell)&=\floor{\frac{j(\ell) + e}{8}}\leq\floor{\frac 78} = 0,\\
T_{0,e}^{(3)}(j)(r+\ell)&=\floor{\frac{j(r+\ell) + e}{8}} = \floor{\frac{j(r)-1}8+\frac {e+1}8}=\frac{j(r)-1}8.
\end{align*}
It remains to verify~\eqref{eqn:saving_3}, which is clearly equivalent to
\[
f^{(m),i}_{1,e'_1}\overline {f^{(m),i}_{1,e''_1}}=-f^{(m),i}_{1,e'_2}\overline{f^{(m),i}_{1,e''_2}},
\]
since the weights have absolute value $1$.
By~\eqref{eqn:general_transformation_dec},~\eqref{eqn:complex_weight_def} and the definition of $e_1',e_2',e_1'',e_2''$ this equation is equivalent to
\begin{equation}\label{eqn:weight_equality}
f^{(3),j}_{0,2}\overline {f^{(3),j}_{0,3}}=-f^{(3),j}_{0,4}\overline{f^{(3),j}_{0,5}}.
\end{equation}
By~\eqref{eqn:b_structure} we have, for any sequence $i\in \mathcal I_K$ and all $\delta,\varepsilon\in\{0,1\}$,
\begin{equation}\label{eqn:weight_formula}\nonumber
f_{\delta,\varepsilon}^{i}
=
\e\p{\frac 12\sum_{\ell<L}a_\ell(i(\ell)-i(r+\ell))}.
\end{equation}
Using this identity,~\eqref{eqn:j_first_part},~\eqref{eqn:j_second_part} and the assumption $a_0=1$ the verification of~\eqref{eqn:weight_equality} is straightforward, which completes the proof.
\end{proof}
Let $d=(d_{\lambda-1}\cdots d_0)_2<2^\lambda$.
We call a position $\mu$ {\it good} (a notion that depends on $\lambda,d,i,r$ and $m$), if the following properties are satisfied.
\begin{enumerate}[(a)]   
\item $0\leq \mu\leq \lambda-m$. \label{it:good_a}
\item $(d_{\mu+m-1},\ldots,d_{\mu+1},d_\mu)=(0,\ldots,0,1)$. \label{it:good_b}
\item $T_{d,0}^{(\mu)}(i)(r)\equiv 1\bmod 2^m$. \label{it:good_c}
\end{enumerate}
The point of this notion is that at each good position, using \lemref{lem:saving}, we win a factor $1-\eta$ in the estimate of $\Phi(h,d)$. This argument is carried out in the following lemma.
\begin{lem}\label{lem:single_estimate}
Let $i\in \mathcal I_K$ and $k\geq 0$ and assume that $d<2^\lambda$ is such that the number of good positions $\mu$ is at least $k$. Then
  \begin{align*}
    \abs{G_{\lambda}^{i}(h,d)}^2 \leq (1-\eta)^{k}
  \end{align*}
  holds for all $h\in\dZ$ and $\eta = 2/4^m$.
\end{lem}
\begin{proof}
Let $0\leq\mu_0<\ldots<\mu_{k-1}$ be good positions.
We set $\mu_k=\lambda$.
The estimate~\eqref{eqn:trivial_estimate} implies that
\begin{align*}
  \abs{G_{\lambda}^{i}(h,d)}^2 \leq \max_{e<2^{\mu_0}}
  \abs{G_{\lambda-\mu_0}^{T^{(\mu_0)}_{d,e}(i)}(h,\floor{d/2^{\mu_0} } ) }^2.
\end{align*}
Let $0\leq j<k$.
Since the position $\mu_j$ is good, we know by~\eqref{it:good_c} and~\eqref{eqn:trafo_shift} that $T^{(\mu_j)}_{d,e}(r)\bmod 2^m\in\{1,2\}$ for all $e<2^{\mu_j}$.
Thus we can apply \lemref{lem:saving}, the identity~\eqref{eqn:general_transformation_dec} and the estimate~\eqref{eqn:trivial_estimate} and obtain
\begin{align*}
\hspace{4em}&\hspace{-4em}\max_{e<2^{\mu_j}}
\abs{G_{\lambda-\mu_j}^{T^{(\mu_j)}_{d,e}(i)}(h,\floor{d/2^{\mu_j} } ) }^2
\\&\leq
  \max_{e<2^{\mu_j}} (1-\eta) \max_{e'<2^m}\abs{G_{\lambda-\mu_j-m}^{T^{(m)}_{1,e'} \circ T^{(\mu_j)}_{d,e}(i)}\p{h,\floor{d/2^{\mu_j+m} } } }^2
\\&= (1-\eta) \max_{e<2^{\mu_j+m}} \abs{G_{\lambda-\mu_j-m}^{T^{(\mu_j+m)}_{d,e}(i)}\p{h,\floor{d/2^{\mu_j+m} } } }^2
\\&\leq (1-\eta) \max_{e<2^{\mu_{j+1} } }
\abs{G_{\lambda-\mu_{j+1}}^{T^{(\mu_{j+1})}_{d,e}(i)}\p{h,\floor{d/2^{\mu_{j+1} } } } }^2.
\end{align*}
This proves the desired upper bound.
\end{proof}
In order to show that for most $d$ there are many good positions, we have a closer look at condition~\eqref{eqn:prepared}.
\begin{lem}\label{lem:prepare}
Write $r=2^xr_0$ with $r_0$ odd and assume that $y\geq 0$ and $0\leq d_0<2^y$.
Let $i\in \mathcal I_K$.
There exists a unique $d_1\in \{0,\ldots,2^m-1\}$ such that for all $d_2\in \{0,\ldots,2^{x-m}-1\}$ we have
\[  T_{d,0}^{(x+y)}(i)(r)\equiv 1\bmod 2^m,  \]
where $d=2^{y+m}d_2+2^yd_1+d_0$.

If $d_1'<2^y$ is different from $d_1$,
we have $T_{d,0}^{(x+y)}(i)(r)\not\equiv 1\bmod 2^m$ for all $d_2\in \{0,\ldots,2^{x-m}-1\}$.
\end{lem}
\begin{proof}
Since $r_0$ is odd, the statements follow from
\[
T_{d,0}^{(x+y)}(i)(r)
=
\floor{\frac{i(r)}{2^{x+y}}+\frac{r_0d_0}{2^y}}+r_0d_1+r_02^md_2
\equiv
\floor{\frac{i(r)}{2^{x+y}}+\frac{r_0d_0}{2^y}}+r_0d_1\bmod 2^m.
\]
\end{proof}
Note that the good-ness of a position $\mu$ does not depend on the digits of $d$ with indices $\mu-x+m,\ldots,\mu-1$.
Let $\lambda\geq 0$.
We decompose the set $\{0,\ldots,\lambda-1\}$ into intervals as follows.
Consider the mutually disjoint sets of indices
\begin{equation*}
\begin{aligned}
A_1&=\{2\ell_1 x+\ell_0m: 0\leq\ell_1<\floor{\lambda/(2x)}\text{ and }0\leq\ell_0<\floor{x/m}\},\\
A_2&=\{(2\ell_1+1) x+\ell_0m: 0\leq\ell_1<\floor{\lambda/(2x)}\text{ and }0\leq\ell_0<\floor{x/m}\},
\end{aligned}
\end{equation*}
which form the starting points of intervals of length $m$. We call these intervals to be {\it of type~1} and~{\it 2} respectively.
The integers in 
$[0,\lambda)$ 
not contained in an interval of type~1 or~2 form intervals {\it of type~3}, having total length $\lambda-2m\floor{\lambda/(2x)}\floor{x/m}$.
Assume that $\lambda\geq 2x$, which will be guaranteed by the hypotheses of \propref{prp:uniform_fourier}.
Then, beginning at~$0$, the resulting partition starts with $\floor{x/m}$ intervals of type~1, followed possibly (if and only if $m\nmid x$) by an interval of type~3 , which fills up the gap up to position~$x$.
This is followed by~$\floor{x/m}$ intervals of type~2 and possibly an interval of type~3, reaching~$2x$. This pattern continues up to~$2x\floor{\lambda/(2x)}$, the last interval of type~3 however extends up to $\lambda$.
\begin{lem}\label{lem:number_good}
Let $M$ be a $k$-element subset of $A_2$.
The number of $d<2^\lambda$ such that $M$ is the set of good positions in $A_2$
equals $2^{\lambda-2m\lambda_0}(2^{2m}-1)^{\lambda_0-k}$, where $\lambda_0=\lvert A_1\rvert=\lvert A_2\rvert=\floor{\lambda/(2x)}\floor{x/m}$.
\end{lem}
\begin{proof}
We construct recursively the set of admissible $d=(d_{\lambda-1}\cdots d_0)_2<2^\lambda$.
In order to do so, we let $\mu$ run through the set $A_1\cup A_2\cup A_3$ in ascending order and choose digits of $d$ in such a way that all digits up to position $\mu$ have already been chosen when we reach $\mu$.
If we encounter an index $\mu\in A_1$, we set the $2m$ digits $d_{\mu},\ldots,d_{\mu+m-1},d_{\mu+x},\ldots,d_{\mu+x+m-1}$ according to two cases:
if $\mu+x\in M$, we have to guarantee good-ness of the position $\mu+x$,
we therefore set $(d_{\mu+m-1},\ldots,d_{\mu})$ according to \lemref{lem:prepare} and $(d_{\mu+x+m-1},\ldots,d_{\mu+x})=(0,\ldots,0,1)$.
If $\mu+x\not\in M$, we may choose any of the remaining $2^{2m}-1$ possibilities for these $2m$ digits, such that the position $\mu+x$ will not be good.
If we encounter $\mu\in A_2$, we do nothing, since the corresponding block of length~$m$ has already been filled.
If we find an index $\mu\in A_3$, it is the starting point of an interval $I$ of type~3.
We may set the digits of $d$ with indices in $I$ freely.
Therefore we can choose $\lambda-2m\lvert A_1\rvert$ digits arbitrarily, and $\lvert A_1\rvert-k$ times we may choose out of $2^{2m}-1$ possibilities. This implies the statement of the lemma.
\end{proof}
The proof of \propref{prp:uniform_fourier} is now easy.
\begin{proof}[Proof of Proposition \ref{prp:uniform_fourier}]
Let $\lambda_0=\floor{\lambda/(2x)}\floor{x/m}$.
It follows from \lemref{lem:number_good} that there are precisely $\binom{\lambda_0}{k} 2^{\lambda-2m\lambda_0} \bigl(2^{2m}-1\bigr)^{\lambda_0-k}$ integers $d\in\{0,\ldots,2^\lambda-1\}$ such that exactly~$k$ positions from $A_2$ are good.
Since each $d$ occurs for some $k\leq \lambda_0$, \lemref{lem:single_estimate} implies
\begin{align*}
  \sum_{d < 2^{\lambda}} \max_{h<2^{\lambda}} \abs{G_{\lambda}^{i}(h,d)}^2
&\leq
  2^{\lambda-2m\lambda_0}\sum_{k=0}^{\lambda_0} \binom{\lambda_0}{k}(2^{2m}-1)^{\lambda_0-k} (1-\eta)^{k}
\\&=
  2^{\lambda} (1-2/16^m)^{\lambda_0}.
\end{align*}

By~\eqref{eqn:r_divisibility} we obtain
\[
\lambda_0=\floor{\lambda/(2x)}\floor{x/m}\geq
\frac{\lambda-2x}{2x}\frac{x-m}{m} \geq \frac{\lambda/2}{2x}\frac{x/2}{m} = \frac{\lambda}{8m}.
\]
This finishes the proof.
\end{proof}
\section{Proof of \propref{prp:PS_via_beatty_2}}\label{sec:proof_PS_via_beatty}
The following elementary lemma summarizes (and extends)
Lemmas~9 through~11 from~\cite{S2014}.
\begin{lem}\label{lem:approx_lemma}
Let $a,b$ be real numbers such that $a\leq b$ and set $K=b-a$.
Assume that $f:[a,b]\ra\dR$ is twice differentiable and $\abs{f''}\leq B$.
Then for all $\alpha\in f'([a,b])$ the following statements hold.
\begin{enumerate}[(i)]
\item For $a\leq x\leq b$ we have
\[    \abs{x\alpha+f(a)-a \alpha-f(x)} \leq BK^2.    \]
\item If $x\in [a,b]$ is such that
$\norm{x\alpha+f(a)-a\alpha}>BK^2$, then
\[     \floor{f(x)}=\floor{x\alpha+f(a)-a\alpha}.    \]
\item If $a,b$ are integers, $L\geq 1$ is an integer,
 $f:[a,b+L-1]\ra\dR$ is twice differentiable, $\alpha\in f'([a,b])$ and $\lvert f''\rvert\leq B$,
then
\begin{multline*}
\abs{\{n\in 
(a,b]:      
\floor{f(n+\ell)}\neq\floor{(n+\ell)\alpha+f(a)-a\alpha}\text{ for some }\ell<L\}}
\\\leq 2B(K+L-1)^3L+KLD_K(\alpha).
\end{multline*}
\end{enumerate}
\end{lem}
We prove \propref{prp:PS_via_beatty_2}.
For convenience, let $\varphi(n)=0$ for $n\leq 0$ and set $f(n)=n^c$.
We follow the proof of~\cite[Proposition~1]{S2014}.
By analogous considerations as given there, we may assume that $K$ is an integer and that $2\leq K\leq N$.

Define integral partition points $a_i=\ceil{N}+iK$ for $i\geq 0$
and set $M=\max\{i:a_i+L-1\leq 2N\}$.
The integer $M$ satisfies the estimate $KM\leq N$.
We have the decomposition
\begin{equation}\label{eqn:interval_decomposition}
(N,2N]=\left(N,\ceil{N}\right]\cup\bigcup_{0\leq i<M}(a_i,a_{i+1}]\cup(a_M,2N].
\end{equation}
Let $\alpha\in\dR$.
Then by the triangle inequality and the relation $a_{i+1}-a_i=K$ we have for $i<M$
\begin{multline}\label{eqn:elementary_split}
    \bigl\lvert
        \abs{
        \{                    
          n\in (a_i,a_{i+1}]: 
          \varphi\p{\floor{(n+\ell)^c}}=\omega_\ell
          \text{ for }0\leq \ell<L
        \}
        }
      -
        K\delta
    \bigr\rvert
\\
  \leq
      T_1(\alpha,i)
    +
      T_2(\alpha,i)
,
\end{multline}
where
\begin{align*}
  T_1(\alpha,i)
&=
    \bigl\lvert
        \abs{
        \{
          n\in (a_i,a_{i+1}]:
          \varphi\p{\floor{(n+\ell)^c}}=\omega_\ell
          \text{ for }0\leq \ell<L
        \}
        }
\\&
    -\abs{\{
      n\in (a_i,a_{i+1}]:
      \varphi\p{\floor{(n+\ell)\alpha+f(a_i)-a_i\alpha}}=\omega_\ell
      \text{ for }0\leq \ell<L
    \}}\bigr\rvert
\\
    T_2(\alpha,i)
  &=
    \bigl\lvert
      \abs{
        \{
          n\in (a_i,a_{i+1}]:
          \varphi\p{\floor{(n+\ell)\alpha+f(a_i)-a_i\alpha}}=\omega_\ell
          \text{ for }0\leq \ell<L
        \}
      }
\\&
      -
      K\delta
    \bigr\rvert
\end{align*}
We integrate both sides of
~\eqref{eqn:elementary_split} in the variable
$\alpha$ from $f'(a_i)$ to $f'(a_{i+1})$,
divide by the length of the integration range,
and take the sum over $i$ from $0$ to $M-1$, which yields
\begin{multline}\label{eqn:split}
    \bigl\lvert
        \abs{
        \{                             
          n\in \left(a_0,a_M\right]:   
          \varphi\p{\floor{(n+\ell)^c}}=\omega_\ell
          \text{ for }0\leq \ell<L
        \}
        }
      -
        MK\delta
    \bigr\rvert
\\
  \leq
    \sum_{0\leq i<M}
      \frac 1{f'(a_{i+1})-f'(a_i)}
      \int_{f'(a_i)}^{f'(a_{i+1})}
        \left(
          \vphantom{\sum}
          T_1(\alpha,i)
          +T_2(\alpha,i)
        \right)
      \,\mathrm d\alpha
.
\end{multline}
We estimate the first summand. 
If $0\leq i<M$ and $\alpha\in f'([a_i,a_{i+1}])$, \lemref{lem:approx_lemma} gives
\begin{equation}\label{eqn:beatty_erdos_turan}
    T_1(\alpha,i)  \leq  2f''(N)(K+L-1)^3L + LKD_K(\alpha).
\end{equation}
By the Mean Value Theorem we have
\begin{align}\label{eqn:mvt_stretch}
\frac 1{f'(a_{i+1})-f'(a_i)}\ll \frac NK\frac 1{f'(2N)-f'(N)}
\end{align}
for $0\leq i<M$.
Using this and the integral mean discrepancy estimate~\eqref{eqn:mean_discrepancy_int} we obtain
\begin{align*}\label{eqn:kuzmin_landau_integral}
\hspace{4em}&\hspace{-4em}
    \sum_{0\leq i<M}{
      \frac 1{f'(a_{i+1})-f'(a_i)}
      \int_{f'(a_i)}^{f'(a_{i+1})}
        D_K(\alpha)
      \,\mathrm d\alpha
    }
\\
  &\leq
    \frac NK\frac 1{f'(2N)-f'(N)}
    \sum_{0\leq i<M}{
      \int_{f'(a_i)}^{f'(a_{i+1})}
      D_K(\alpha)
      \,\mathrm d\alpha
    }
\\
  &\leq
    \frac NK
    \frac{f'(2N)-f'(N)+1}{f'(2N)-f'(N)}
      \int_0^1
      D_K(\alpha)
      \,\mathrm d\alpha
\\
  &\ll
    \Bigl(N+\frac 1{f''(N)}\Bigr)\frac{\bigl(\logp K\bigr)^2}{K^2}
.
\end{align*}
By the estimates $KM\leq N$
and $N\geq C/f''(N)$
this implies
\begin{multline*}
    \sum_{0\leq i<M}{
      \frac 1{f'(a_{i+1})-f'(a_i)}
      \int_{f'(a_i)}^{f'(a_{i+1})}{
        T_1(\alpha,i)
      }
      \,\mathrm d \alpha
    }
  \\\leq
    C_1NL\biggl(
        f''(N)K^2
      +
        \frac{\bigl(\logp N\bigr)^2}{K}
    \biggr)
\end{multline*}
for some constant $C_1$ depending on $c$ and $L$.
We turn our attention to the second summand in~\eqref{eqn:split}.
Inserting~\eqref{eqn:mvt_stretch} and the definition of $T_2(\alpha,i)$, we easily obtain
\begin{equation}\label{eqn:second_step}
    \sum_{0\leq i<M}{
      \frac{1}{f'(a_{i+1})-f'(a_i)}
      \int_{f'(a_i)}^{f'(a_{i+1})}{
        T_2(\alpha,i)
      }
      \,\mathrm d\alpha
    }
  \ll
    N\,J(N,K)
.
\end{equation}
Estimating also the contributions of the first and the last interval in~\eqref{eqn:interval_decomposition} trivially and collecting the error terms,
we obtain
\begin{multline*}
    \abs{
        \frac 1N
        \bigl\lvert
          \bigl\{                
              n\in (N,2N]:       
              \varphi\bigl(\bigl\lfloor(n+\ell)^c\bigr\rfloor\bigr)=\omega_\ell
              \text{ for }0\leq \ell<L
          \bigr\}
        \bigr\rvert
      -
        \delta
    }
\\
  \leq
    C_2
    \biggl(
        f''(N)K^2
      +
        \frac{(\log N)^2}{K}
      +
        J(N,K)
      +
        \frac KN
    \biggr)
,
\end{multline*}
for some $C_2$ depending on $c$ and $L$.
By the estimate $f''(N)\geq C/N$ the last term is dominated by the first, which finishes the proof of \propref{prp:PS_via_beatty_2}.
\bibliographystyle{siam}
\bibliography{normality}
\end{document}